\newtheorem{theoremalph}{Theorem}
\newtheorem{Theorem}{Theorem}[section]
\newtheorem*{maintheorem}{Main Theorem}
\newtheorem*{Theorem A}{Theorem A}
\newtheorem*{Theorem B}{Theorem B}
\newtheorem{Definition}[Theorem]{Definition}
\newtheorem{Proposition}[Theorem]{Proposition}
\newtheorem{Lemma}[Theorem]{Lemma}
\newtheorem{Remark}[Theorem]{Remark}
\newtheorem*{Acknowledgements}{Acknowledgements}
\DeclareMathOperator{\diam}{diam}
\DeclareMathOperator{\vol}{Vol}
\newcommand\vl{\operatorname{\underline{v}}}
\begin{document}

\newpage
\title{Entropies and volume growth of unstable manifolds}
\author{ Yuntao Zang\footnote{Yuntao Zang is supported by China Scholarship Council (CSC)}}

\maketitle
\begin{abstract}
Let $f$ be a $C^2$ diffeomorphism on a compact manifold. Ledrappier and Young introduced entropies along unstable foliations for an ergodic measure $\mu$. We relate those entropies to covering numbers in order to give a new upper bound on the metric entropy of $\mu$ in terms of Lyapunov exponents and topological entropy or volume growth of sub-manifolds. We also discuss extensions to the $C^{1+\alpha},\,\alpha>0$ case.
	
\end{abstract}
\vskip 0.01cm \noindent{\bf Keywords}: entropy. volume growth. Lyapunov exponents.

\section{Introduction}
\newcommand\loc{{\operatorname{loc}}}
\newcommand\essinf{\operatorname{essinf}}
\newcommand\Diff{\operatorname{Diff}}
\newcommand\Proberg{P_{\operatorname{erg}}}
\renewcommand\top{{\rm top}}

Entropy is a fundamental invariant in dynamics. It can be defined in the topological, ergodic or differentiable categories and quantifies the dynamical complexity. The classical result on the connection between entropy and Lyapunov exponents is the Margulis-Ruelle Inequality  \cite{Rue78}.  It states that for a $C^1$ map $f$ on a compact manifold $M$ and an ergodic invariant Borel probability measure $\mu$, 
\begin{align}
h(f,\mu)\leq\sum_{i=1}^{u}\lambda_{i}(f,\mu)\cdot\dim E^{i}\label{Ruelle-Margulis inequality}
\end{align} where $\lambda_{i}(f,\mu),1\leq i\leq u$ are the positive Lyapunov exponents and $E^{i},1\leq i\leq u$ are the corresponding Oseledets' vector bundles.  
As perhaps first observed by Katok, this inequality implies that measures with positive entropy of surface diffeomorphisms are hyperbolic, i.e., without zero Lyapunov exponents. Katok \cite{Kat80} was then able to analyze such dynamics using the Pesin theory in the $C^{1+\alpha}$ setting.

Also using Pesin theory, Newhouse \cite{New88} proved another bound for the entropy of an ergodic measure  this time by the volume growth of sub-manifolds which are transverse to its stable manifolds. In the $C^{1}$ setting with dominated splitting, without using Pesin theory, Saghin \cite{Sag14} and Guo-Liao-Sun-Yang \cite{GLS18} bounded above the metric entropy  by a mixture between the positive Lyapunov exponents and the volume growth of some sub-manifold. By using Ledrappier-Young's result \cite{LeY85}, Cogswell \cite{Cog00} proved that the volume growth of local unstable manifolds is larger than the metric entropy. Cogswell's proof assumes  $C^{2}$ smoothness since this is required in Ledrappier-Young's work. On the topological side, for $C^{1+\alpha}$ diffeomorphism, Przytycki \cite{Prz80} proved that the topological entropy is bounded above by the growth rate of some differential forms. Later Kozlovski \cite{Koz98} showed it is an equality if the system is $C^{\infty}$.

In this paper, we generalize Cogswell's idea from  \cite{Cog00} to establish a more general upper bound for  $C^{2}$ systems without assuming dominated splitting. We bound the entropy of a measure by a combination of  Lyapunov exponents (as in Ruelle's inequality) and various growths of unstable manifolds such as volume growth (as in Newhouse's inequality \cite{New88}). 
In a forthcoming work, we will use this new bound to extend the previously mentioned Katok's hyperbolicity argument beyond dimension two.

Our proof is a combination of Ledrappier-Young's entropy formula \cite{LeY85} and Pesin theory. We also discuss some extensions for hyperbolic measures in $C^{1+\alpha}$ case.

\medbreak

\begin{maintheorem}\label{maintheorem}
Let $f$ be a $C^{2}$ diffeomorphism on a compact manifold $M$ and let $\mu$ be any ergodic, invariant probability measure. Consider its positive Lyapunov exponents $\lambda_1>\dots>\lambda_u$ and the corresponding $i$-th local unstable manifolds $W^i_{\rm loc}(x)$ for almost every $x\in M$ and $i=1,\dots,u$.

Then the entropy $h(f,\mu)$ is bounded, for any index $1\leq i\leq u$, by the sum of the almost everywhere volume growth of $W^i_{\rm loc}(x)$ and the transverse Lyapunov exponents $\lambda_{i+1},\dots,\lambda_u$, repeated according to multiplicity. 

In this inequality, the volume growth can be replaced by fibered entropy or topological entropy of $W^i_{\rm loc}(x)$.
\end{maintheorem}

We give complete and precise statements in the next section after introducing the required notions. See in particular   Theorem \ref{upper bound mixture C2 setting}.

\subsection{Definitions}

Let $f$ be a $C^{1+\alpha}$ ($\alpha>0$) diffeomorphism on a compact manifold $M$, that is, $f$ is differentiable and its differential is H\"older-continuous with some positive exponent $\alpha$. Let $\mu$ be an ergodic probability measure. Oseledets' Theorem\cite{Ose68} states that there are an invariant measurable subset $R_{\mu}$ with full measure, an invariant measurable decomposition $T_{R_{\mu}}M=E^{1}\oplus E^{2}\oplus\cdots\oplus E^{l}$ and finitely many numbers $\lambda_{1}>\lambda_{2}>\cdots>\lambda_{l}$ such that for any $x\in R_{\mu}$ and any nonzero vector $v\in E^{j}_{x}$, we have $$\lim_{n\to \pm\infty}\frac{1}{n}\log||Df^{n}_{x}(v)||=\lambda_{j}.$$

We list the positive Lyapunov exponents as $\lambda_{1}>\lambda_{2}>\cdots>\lambda_{u}$. By Pesin theory, for $1\leq i\leq u$ and for any $x\in R_{\mu}$, the \emph{$i$-th global unstable manifold} $$W^{i}(x)\triangleq\{y\in M\big|\, \limsup_{n\to+\infty}\frac{1}{n}\log d(f^{-n}(x),f^{-n}(y))\leq-\lambda_{i}\}$$ is a $C^{1+\alpha}$ immersed sub-manifold. 

We define the \emph{$i$-th local unstable manifold}, $$W^{i}_{\rho}(x)\triangleq \,\text{ connected part of}\,\,W^{i}(x)\cap B(x,\rho)\text{ containing }x$$ where $B(x,\rho)$ is the ball centered at $x$ with radius $\rho$. 
At each $x\in R_{\mu}$, we fix a positive number $r(x)$ such that 
$W^{i}_{r(x)}(x)$ is an embedded sub-manifold.

\begin{Definition}\label{existence of subordinate partition}
	Let $f$ be a $C^{1+\alpha}$ diffeomorphism on a compact manifold $M$ and let $\mu$ be an invariant measure. For $1\leq i\leq u$, we say a measurable partition $\xi^{i}$ is \emph{subordinate to $W^{i}$} if for $\mu$-a.e. $x$, 
	\begin{itemize}
		\item $\xi^{i}(x)\subset W^{i}(x)$,
		\item $\xi^{i}(x)$ contains an open neighborhood of $x$ w.r.t. the intrinsic topology on $W^{i}(x)$.
	\end{itemize}
	
\end{Definition}
\begin{Remark}$ $
	\begin{itemize}
		\item We refer to  \cite{Roh52} for background on measurable partitions and associated systems of conditional measures.
		\item 	
		Lemma 9.1.1 in  \cite{LeY85} shows the existence of increasing subordinate measurable partitions. Here a partition $\eta$ is called \emph{increasing} if $\eta(x)\subset f(\eta(f^{-1}(x)))$ for $\mu$-a.e. $x$.
	\end{itemize}	
\end{Remark}

From now on, we fix a family of measurable partitions $\{\xi^{i}\}_{1\leq i\leq u}$ subordinate to $\{W^{i}\}_{1\leq i\leq u}$.
For $1\leq i\leq u$, let $\{\mu_{x}^{i}\}$ be the family of conditional measures w.r.t. the measurable partition $\xi^{i}$. Ledrappier and Young \cite{LeY85} have defined the \emph{entropy along $i$-unstable foliation} $h_{i}(f,\mu)$(for more detail, see Proposition \ref{well defined hi}) by a fibered version of Brin-Katok's formula, namely: $$h_{i}(f,\mu)\triangleq\lim_{\tau\to 0}\liminf_{n\to+\infty}-\frac{1}{n}\log\mu_{x}^{i}(V^{i}(x,n,\tau))$$ where $$V^{i}(x,n,\tau)\triangleq\{y\in W^{i}_{r(x)}(x)|\,d(f^{j}(x),f^{j}(y))\leq\tau,\,0\leq j\leq n-1\}.$$ 
Remark that here in the definition of the dynamical ball $V^{i}(x,n,\tau)$, we use the global metric $d$ on $M$, unlike the definition in  \cite{LeY85} that uses the intrinsic metric on the sub-manifold $W^{i}(x)$. But since we only consider the case when $\tau \to 0$, our definition of $h_{i}(f,\mu)$ coincides with theirs.

The volume of a sub-manifold $\gamma\subset M$ of constant dimension is denoted by $\vol(\gamma)$. The lower volume growth of such a sub-manifold $\gamma\subset M$ with $\vol(\gamma)<\infty$ is:
$$
\vl(f,\gamma)\triangleq\liminf_{n\to\infty} \frac1n \log^+ \vol(f^{n}(\gamma))
$$ where $\log^{+} a=\max\{0,\log a\}$.

We now introduce the key concepts of our results. They are well defined by Lemma \ref{four functions are well defined} in Section \ref{basic properties}.
\begin{Definition}
Given $1\leq i\leq u$, the \emph{$\mu$-a.e. lower volume growth rate of $W^{i}$} is the $\mu$-a.e. value of $$\vl_{i}(f,\mu)\triangleq \inf_{\rho}\vl(f,W^{i}_{\rho}(x)).$$
\end{Definition}
	Let $E(n,\varepsilon,\gamma)$ denote a maximal $(n,\varepsilon)$ separated subset of a $C^{1}$ sub-manifold $\gamma$. For the definitions of separated subset and some other basic concepts in ergodic theory, see the book \cite{Wal82}.

\begin{Definition}
	Given $1\leq i\leq u$, the \emph{$\mu$-a.e. lower topological entropy of $W^{i}$} is the $\mu$-a.e. value of $$\underline{h}^{i}_{\top}(f,\mu)\triangleq \inf_{\rho}\lim_{\varepsilon\to 0}\liminf_{n\to+\infty}\frac{1}{n}\log \#E(n,\varepsilon,W^{i}_{\rho}(x)).$$
\end{Definition}
\begin{Remark}
	Recall that the topological entropy of $W^{i}_{\rho}(x)$ is $$h_{\top}(f,W^{i}_{\rho}(x))\triangleq \lim_{\varepsilon\to 0}\limsup_{n\to+\infty}\frac{1}{n}\log \#E(n,\varepsilon,W^{i}_{\rho}(x)).$$ Hence we have $\underline{h}^{i}_{\top}(f,\mu)\leq \inf_{\rho}h_{\top}(f,W^{i}_{\rho}(x))$ for any $x\in R_{\mu}$. Note that  $\inf_{\rho}h_{\top}(f,W^{i}_{\rho}(x))$ is also $\mu$-a.e. constant.
\end{Remark}

For $1\leq i\leq u,\, x\in R_{\mu}$ and $\lambda>0$, define $$N_{\lambda}(\mu^{i}_{x},n,\varepsilon)\triangleq\min\{\#C\subset R_{\mu}:\mu^{i}_{x}(\bigcup_{x\in C}V^{i}(x,n,\varepsilon))\geq\lambda\}.$$

\begin{Definition}
	Given $1\leq i\leq u$, the \emph{upper fibered Katok entropy  of $W^{i}$} is the $\mu$-a.e. value of $$\overline{h}_{i}^{K}(f,\mu)\triangleq \inf_{\lambda}\lim_{\varepsilon\to 0}\limsup_{n\to+\infty}\frac{1}{n}\log N_{\lambda}(\mu^{i}_{x},n,\varepsilon).$$
	Similarly, the \emph{lower fibered Katok entropy of $W^{i}$} is the $\mu$-a.e. value of $$\underline{h}_{i}^{K}(f,\mu)\triangleq \inf_{\lambda}\lim_{\varepsilon\to 0}\liminf_{n\to+\infty}\frac{1}{n}\log N_{\lambda}(\mu^{i}_{x},n,\varepsilon).$$
\end{Definition}
\begin{Remark}
	The above definition is analogous to the formula of  Katok in  \cite{Kat80}, expressing the metric entropy as the growth rate of the cardinality of maximal separated sets. 

\end{Remark}

~\\
\subsection{Main results}
From now on, when we say $C^{1+\alpha}$ diffeomorphism, we always assume $\alpha>0$.
\begin{theoremalph}\label{main result}
	Let $f$ be a $C^{1+\alpha}$ diffeomorphism on a compact manifold $M$. Let $\mu$ be an ergodic measure.  List the positive Lyapunov exponents of $\mu$ as $\lambda_{1}>\lambda_{2}>\cdots>\lambda_{u}>0$. Then for $1\leq i\leq u$, the entropy along the $i$-th unstable foliation satisfies:
	 \begin{enumerate}
	  \item $h_{i}(f,\mu)=\underline{h}_{i}^{K}(f,\mu)=\overline{h}_{i}^{K}(f,\mu)$;
	  \item $h_i(f,\mu)\leq \underline{h}^{i}_{\top}(f,\mu)$;
	  \item $h_i(f,\mu)\leq \vl_{i}(f,\mu)$.
	 \end{enumerate}
\end{theoremalph}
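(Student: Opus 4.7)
The plan is to combine Katok's classical argument with Ledrappier--Young's fibered Brin--Katok formula defining $h_i$ and, for part~(3), to adapt Cogswell's volume-growth argument by means of Pesin theory. For part~(1) I establish $\underline{h}_{i}^{K} \geq h_i$ and $\overline{h}_{i}^{K} \leq h_i$ separately. Fixing $\delta,\tau>0$ and applying Egorov to the fibered Brin--Katok definition of $h_i$, one obtains a set $A \subset \xi^i(x)$ with $\mu_{x}^{i}(A) > 1-\delta$ on which $\mu_{x}^{i}(V^i(y, n, \tau)) \leq e^{-n(h_i - \delta)}$ for every $y \in A$ and $n$ large. A cover of $\mu_{x}^{i}$-mass $\lambda$ close to $1$ by significantly fewer than $e^{n(h_i - 2\delta)}$ dynamical $(n, \tau)$-balls would, by pigeonhole, force at least one ball centered in $A$ to have $\mu_{x}^{i}$-measure exceeding $e^{-n(h_i - \delta)}$, a contradiction. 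Conversely, fibered Brin--Katok also yields a set $A'$ of $\mu_{x}^{i}$-measure $> 1-\delta$ on which $\mu_{x}^{i}(V^i(y, n, \tau)) \geq e^{-n(h_i + \delta)}$ eventually; a maximally $(n, 2\tau)$-separated $E \subset A'$ has pairwise disjoint $(n, \tau)$-balls each of the above measure, forcing $|E| \leq e^{n(h_i+\delta)}$, while its $(n, 4\tau)$-balls cover $A'$, witnessing $N_{\lambda}(\mu_{x}^{i}, n, 4\tau) \leq |E|$ for $\lambda < 1-\delta$.

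For part~(2), I use that $\xi^i(x) = \bigcup_{\rho} \bigl(\xi^i(x) \cap W^i_\rho(x)\bigr)$ increasingly, so $\mu_{x}^{i}(W^i_\rho(x)) \geq \lambda$ once $\rho$ is large. A maximal $(n, \varepsilon)$-separated set $E \subset W^i_\rho(x)$ is $(n, 2\varepsilon)$-spanning there, and after perturbing each center into the full-measure set $R_\mu$, this witnesses $N_{\lambda}(\mu_{x}^{i}, n, 3\varepsilon) \leq \#E$. Passing to $\liminf_n$, $\lim_{\varepsilon \to 0}$, $\inf_\rho$ and invoking part~(1) gives $h_i \leq \underline{h}^{i}_{\top}$.

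For part~(3), I bound $\#E(n, \varepsilon, W^i_\rho(x))$ by $\vol\bigl(f^{n-1}(W^i_\rho(x))\bigr)$. Since $f$ is a diffeomorphism, the pairwise disjoint dynamical balls $V^i(y, n, \varepsilon/2)$ around a maximally $(n, \varepsilon)$-separated $E$ map under $f^{n-1}$ to pairwise disjoint open subsets of the stretched leaf. The geometric core is the inclusion $B^{\text{intr}}_{\varepsilon'}\bigl(f^{n-1}(y)\bigr) \subset f^{n-1}\bigl(V^i(y, n, \varepsilon/2)\bigr)$ for a suitable $\varepsilon' = \varepsilon'(\varepsilon) > 0$: the exponential contraction of $f^{-1}$ along $TW^i$ (at rate at most $e^{-\lambda_i}$ per step, up to bounded distortion) keeps an intrinsic $\varepsilon'$-neighborhood of $f^{n-1}(y)$ within ambient distance $\varepsilon/2$ of $y$'s orbit throughout $[0, n-1]$. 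Summing yields $\#E \cdot c(\varepsilon) \leq \vol\bigl(f^{n-1}(W^i_\rho(x))\bigr)$, and $\frac{1}{n}\log$, $\liminf_n$, $\lim_{\varepsilon \to 0}$, $\inf_\rho$ deliver $\underline{h}^{i}_{\top} \leq \vl_i$; combined with part~(2) this proves~(3).

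The main obstacle is securing the uniform leaf-volume lower bound $c(\varepsilon) > 0$ on intrinsic $\varepsilon'$-balls in $f^{n-1}(W^i)$ at the chosen $f^{n-1}(y)$'s, since a priori the iterated leaf could develop degenerate geometry far from $x$. My intended resolution is Pesin-theoretic: restricting $E$ to centers $y$ whose iterate $f^{n-1}(y)$ lies in a Pesin block $\Lambda$ of large $\mu$-measure with uniform Pesin constants, the unstable leaves through $\Lambda$ are graphs of uniformly controlled functions over $E^u$ in Pesin charts and therefore have uniformly bounded-below intrinsic ball-volumes. Birkhoff recurrence into $\Lambda$ along a positive-density sequence of times $n$ ensures that this restriction discards only a small fraction of $E$, which is compatible with the $\liminf_n$ appearing in the definition of $\vl_i$.
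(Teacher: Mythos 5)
Your proof of part (1) follows the same Katok-style strategy as the paper. Two cautionary details: the infimum in $\underline h^K_i$ runs over all $\lambda>0$, so after Egorov you should take $\delta<\lambda$ rather than restricting to $\lambda$ near $1$; and the balls realizing $N_\lambda$ are centered at arbitrary points of $R_\mu$, not in your good set $A$, so you must pass (as the paper does) to a doubled radius around a witness $\tilde y\in V^i(y,n,\tau)\cap A$ before the Egorov bound applies. Both repairs are routine and your argument is then essentially the paper's, with the filtration $\Delta^\varepsilon_\eta(p,j)$ replaced by Egorov.

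For part (2) you deduce the bound from part (1) by a spanning argument, whereas the paper proves it directly by covering $\Delta^\varepsilon_\eta(p,j)\cap W^i_\rho(p)$, which has positive $\mu^i_p$-mass, by separated $(n,\varepsilon)$-balls of uniformly small mass. Your route is legitimate but the quantifier order is reversed: you fix $\lambda$ and take $\rho$ large, yet $\underline h^i_\top$ takes $\inf_\rho$, which is governed by \emph{small} $\rho$. The fix is to fix $\rho>0$ first and then choose $\lambda\leq\mu^i_x(W^i_\rho(x))$, which is positive for every $\rho>0$ because $\xi^i(x)$ contains an intrinsic neighborhood of $x$; with that reordering your reduction works.

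Part (3) has a genuine gap. You want $\underline h^i_\top\leq\vl_i$ by pushing forward disjoint $(n,\varepsilon/2)$-dynamical balls around a separated set $E$ and comparing to intrinsic $\varepsilon'$-balls at the iterated centers, with $\varepsilon'>0$ independent of $n$. That inclusion requires $f^{n-1}(y)$ to lie in a fixed Pesin block (otherwise the drift of the Pesin constant $r_{k+n}^{-1}$ destroys the uniform $\varepsilon'$), and you propose to control the discarded centers by Birkhoff recurrence along a positive-density sequence of times. But $E$ is an arbitrary maximal separated set, not a measure-typical collection: Birkhoff's theorem gives no control whatsoever on what fraction of the finitely many points of $E$ return to the block at a given time $n$, nor on the existence of a common recurrence time. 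The paper circumvents this entirely by letting the dynamical ball radius shrink with $n$: it works with $\tau_n=r_ke^{-n\varepsilon}$ so that $f^n(V^i(x,n,\tau_n))$ contains the intrinsic ball $W^i_{r_{k+n}\tau_n}(f^n(x))$ for every $x\in\Lambda^\varepsilon_k$ automatically, because the shrinkage is tuned to the sub-exponential drift of the Pesin constants; and the lower bound on the separated count comes not from recurrence of the centers but from the $\mu^i_p$-mass of the measured set $\Lambda^{\varepsilon,p}_{k,j}$ together with the Brin--Katok decay. The loss $(r_{k+n}r_ke^{-n\varepsilon})^{\sum_l\dim E^l}$ is only $e^{-O(\varepsilon)n}$ and disappears as $\varepsilon\to 0$. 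Without a device of this kind, your argument for (3) does not close, and the intermediate inequality $\underline h^i_\top\leq\vl_i$ (which the paper never claims in this generality) remains unproved.
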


Let $h(f,\mu)$ be the entropy of $\mu$. When $f$ is $C^{2}$, Ledrappier and Young have shown the following entropy formula (Theorem C$'$ in  \cite{LeY85}): for any $1\leq i\leq u$,
$$
h(f,\mu)= h_{i}(f,\mu)+\sum_{j=i+1}^u \lambda_j\cdot\gamma_j.
$$
where $\gamma_1,\dots,\gamma_u$ are some transverse dimensions satisfying $\gamma_j\leq \dim E^j$.
Therefore, Theorem~\ref{main result} immediately implies:
\begin{theoremalph}\label{upper bound mixture C2 setting}
	Let $f$ be a $C^{2}$ diffeomorphism on a compact manifold $M$. Let $\mu$ be an ergodic measure. List the positive Lyapunov exponents of $\mu$ as $\lambda_{1}>\lambda_{2}>\cdots>\lambda_{u}$. Then for $1\leq i\leq u$, $$h(f,\mu)=\underline{h}_{i}^{K}(f,\mu)+ \sum_{j=i+1}^u \lambda_j\cdot\gamma_j,$$ $$h(f,\mu)\leq \underline{h}^{i}_{\top}(f,\mu)+ \sum_{j=i+1}^u \lambda_j\cdot\gamma_j,$$ $$h(f,\mu)\leq\vl_{i}(f,\mu)+ \sum_{j=i+1}^u 
	\lambda_j\cdot\gamma_j.$$
\end{theoremalph}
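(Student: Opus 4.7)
The plan is a direct deduction from Theorem~\ref{main result} combined with the Ledrappier-Young entropy formula. The key non-trivial input beyond Theorem~\ref{main result} itself is Theorem C$'$ of \cite{LeY85}, which asserts that when $f$ is $C^{2}$, for every $1\leq i\leq u$ one has
\[
h(f,\mu) \;=\; h_{i}(f,\mu) \;+\; \sum_{j=i+1}^{u}\lambda_{j}\,\gamma_{j},
\]
where the transverse dimensions $\gamma_{j}\leq\dim E^{j}$ are $\mu$-a.e. constant numbers produced by the Ledrappier-Young analysis. This is precisely the step that consumes the $C^{2}$ assumption, since the entropy formula of \cite{LeY85} is known to require this regularity; Theorem~\ref{main result}, by contrast, only needs $C^{1+\alpha}$.

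Granted this formula, the three conclusions are obtained by substitution into the single identity above. For the equality, I would apply part (1) of Theorem~\ref{main result} to replace $h_{i}(f,\mu)$ by $\underline{h}_{i}^{K}(f,\mu)$ (equivalently by $\overline{h}_{i}^{K}(f,\mu)$). For the two inequalities, I would apply parts (2) and (3) of Theorem~\ref{main result} to bound $h_{i}(f,\mu)$ from above by $\underline{h}_{\top}^{i}(f,\mu)$ and by $\vl_{i}(f,\mu)$ respectively, leaving the tail $\sum_{j=i+1}^{u}\lambda_{j}\gamma_{j}$ untouched on the right-hand side.

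The only verification worth recording is that every ingredient appearing in the three final displays is indeed $\mu$-a.e. constant, so that the statements are well posed: the Lyapunov exponents and the transverse dimensions by ergodicity and the Ledrappier-Young construction, and the three unstable-side quantities $\underline{h}_{i}^{K}$, $\underline{h}_{\top}^{i}$, $\vl_{i}$ by Lemma~\ref{four functions are well defined}. There is no genuine obstacle at this stage; the entire technical burden has been shifted to the proof of Theorem~\ref{main result} and to Theorem C$'$ of \cite{LeY85}, which is invoked as a black box. In particular, no new estimate linking $h(f,\mu)$ to the unstable manifolds is needed here beyond what Theorem~\ref{main result} already provides.
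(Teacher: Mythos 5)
Your proposal is correct and follows exactly the paper's route: the paper states Theorem C$'$ of Ledrappier--Young, $h(f,\mu)=h_i(f,\mu)+\sum_{j>i}\lambda_j\gamma_j$, and then notes that Theorem~\ref{main result} immediately yields Theorem~\ref{upper bound mixture C2 setting} by the same substitutions you describe. Nothing further is required.
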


The above contains the Main Theorem from page~\pageref{maintheorem}.

When the measure $\mu$ is hyperbolic, i.e., when $\mu$ has no zero Lyapunov exponents, the result in Theorem \ref{upper bound mixture C2 setting} is true for $i=u$ without the $C^{2}$ assumption. 

\begin{theoremalph}\label{upper bound mixture C1+ setting}
	Let $f$ be a $C^{1+\alpha}$ diffeomorphism on a compact manifold $M$. Let $\mu$ be an ergodic measure. If $\mu$ is hyperbolic, then $$h_{u}(f,\mu)=h(f,\mu).$$
\end{theoremalph}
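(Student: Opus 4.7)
The plan is to use Pesin theory to show that, on a fixed Pesin block, the ambient Bowen ball $B(x,n,\tau)$ has a local product structure aligning with the fibered Bowen ball $V^u(x,n,\tau')$ for a comparable $\tau'$, so that the $\mu$-measure of $B(x,n,\tau)$ and the $\mu^u_x$-measure of $V^u(x,n,\tau')$ agree up to an $n$-independent prefactor. Brin-Katok's theorem gives
\[
h(f,\mu)=\lim_{\tau\to 0}\liminf_{n\to\infty}-\frac{1}{n}\log\mu(B(x,n,\tau))
\]
for $\mu$-a.e.\ $x$, and by definition
\[
h_u(f,\mu)=\lim_{\tau\to 0}\liminf_{n\to\infty}-\frac{1}{n}\log\mu^u_x(V^u(x,n,\tau)),
\]
so such a comparison immediately yields $h(f,\mu)=h_u(f,\mu)$, giving both inequalities simultaneously.

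The comparison is set up as follows. Fix a Pesin block $\Lambda_k$ with $\mu(\Lambda_k)>1-\delta$ on which local stable/unstable manifolds have uniform size $\rho_k$, the stable direction is uniformly forward-contracted, and by Pesin's absolute continuity theorem the stable holonomy between nearby unstable leaves has Jacobians bounded by some $J_k$. I cover (up to small measure) a neighborhood of $\Lambda_k$ by finitely many Pesin rectangles $R$ of local product form and disintegrate $\mu|_R=\int\mu^u_y\,d\bar\mu(y)$ along the unstable foliation. For a typical $x\in R\cap\Lambda_k$, I expect the ambient Bowen ball $B(x,n,\tau)\cap R$ to be sandwiched between product sets of the form (stable slice of width of order $\tau$) $\times$ ($V^u$-piece of radius of order $C_k\tau$): uniform forward contraction of stable manifolds on $\Lambda_k$ implies that stable displacements of order $\tau$ remain of order $\tau$ forever, while absolute continuity of the stable holonomy transfers $\mu^u_y$ back to $\mu^u_x$ with bounded distortion. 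This gives
\[
\mu(B(x,n,\tau))\;\asymp\;\bar\mu(W^s_\tau(x)\cap R)\cdot \mu^u_x(V^u(x,n,C_k\tau)),
\]
with the transverse factor independent of $n$. Applying $-\tfrac{1}{n}\log$, taking $\liminf_n$ and then $\tau\to 0$ yields the equality.

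The main obstacle is to establish the sandwich rigorously, with $n$-independent constants. The forward inclusion follows directly from uniform forward contraction on Pesin blocks. The reverse inclusion, that any $y\in B(x,n,\tau)\cap R$ projects via its stable manifold into $V^u(x,n,C_k\tau)$, is more delicate: it requires uniform control on the angle between the stable and unstable bundles along the forward orbit of $x$, which holds on the full-measure set of points whose forward orbit returns to $\Lambda_k$ with positive frequency by Birkhoff. A secondary subtlety is handling orbits that exit $R$; here the hyperbolicity assumption is essential, since it rules out the neutral directions along which $B(x,n,\tau)$ could spread without being captured by $V^u$---precisely the source of the extra $\sum_{j>i}\lambda_j\gamma_j$ term present in the $C^2$ Ledrappier-Young formula but absent in the purely unstable setting $i=u$ treated here.
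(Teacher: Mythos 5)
Your approach --- comparing ambient Bowen balls $B(x,n,\tau)$ to fibered Bowen balls $V^u(x,n,\tau')$ through a Pesin rectangle's local product structure --- is a genuinely different strategy from the paper's, which argues purely in terms of measurable partitions: Lemma 2.4.2 of \cite{LeY85I} produces a finite-entropy partition $\xi$ with $h_\mu(f,\xi)\geq h(f,\mu)-\varepsilon$ and $\xi^+(x)\subset S^{cu}(x)$; hyperbolicity enters only through Lemma \ref{hyperbolicity implies points always in chart are in unstable manifold}, giving $S^{cu}(x)\subset W^u(x)$ so that $\xi^+$ is subordinate to $W^u$; then Lemma 3.2.1 of \cite{LeY85I} together with Lemma \ref{extension of the relations between subordinate partitions} yields $h_u(f,\mu)=h_\mu(f,\xi^u\vee\xi^+)=h_\mu(f,\xi^+)=h_\mu(f,\xi)\geq h(f,\mu)-\varepsilon$, while $h_u\leq h$ is immediate from $h_u=h_\mu(f,\xi^u)$. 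You do correctly identify where hyperbolicity is used (no center direction along which the Bowen ball can spread without being captured by $V^u$), which matches the paper's observation that $S^{cu}$ collapses to $W^u$.

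However, as written your argument has a genuine gap in the step "absolute continuity of the stable holonomy transfers $\mu^u_y$ back to $\mu^u_x$ with bounded distortion." Pesin's absolute continuity theorem concerns the stable holonomy acting on leaf \emph{volume} (Riemannian measure on the unstable leaves), not on the \emph{conditional measures} $\mu^u_x$ of the given hyperbolic measure. For a general hyperbolic ergodic measure the conditionals on unstable leaves need not be absolutely continuous with respect to leaf volume, and the conditionals on two nearby leaves need not be mutually absolutely continuous under the stable holonomy, let alone with bounded Radon--Nikodym derivatives --- that is a feature of SRB measures or measures with local product structure, not of hyperbolic measures in general (e.g.\ an equilibrium state on a horseshoe for a potential far from geometric). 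Without it, the factorization $\mu(B(x,n,\tau))\asymp\bar\mu(W^s_\tau(x)\cap R)\cdot\mu^u_x(V^u(x,n,C_k\tau))$ does not follow from the disintegration $\mu|_R=\int\mu^u_y\,d\bar\mu(y)$: the integrand $\mu^u_y(V^u(\cdot,n,\tau'))$ can vary uncontrollably from leaf to leaf, so you cannot pull $\mu^u_x$ out of the integral. This is a separate and more fundamental problem than the "reverse inclusion" you already flag as an obstacle; the paper's partition-theoretic route avoids any comparison between conditional measures on distinct leaves and so sidesteps precisely this difficulty.
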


\begin{Remark}
 As a consequence of Theorems \ref{main result}~and~\ref{upper bound mixture C1+ setting}, 
 $$h(f,\mu)=\underline{h}_{u}^{K}(f,\mu)=\overline{h}_{u}^{K}(f,\mu).$$ 
 Moreover, this quantity  is bounded above both by $\underline{h}^{u}_{\top}(f,\mu)$ and $\vl_{u}(f,\mu)$.
\end{Remark}

\subsection{Remarks}

Let us explain our motivation beyond the desire to prove natural inequalities.

Theorem \ref{upper bound mixture C2 setting} will be used in a forthcoming work to study some entropy-hyperbolic diffeomorphisms (as suggested by Buzzi \cite{Buz09}). More precisely, we will find a non-empty $C^{\infty}$ open set of diffeomorphisms which are not uniformly hyperbolic but whose ergodic measures of entropy close to the topological entropy are nevertheless hyperbolic and of given index. 

Theorem~\ref{upper bound mixture C1+ setting} extends by a simple argument the Ledrappier-Young entropy formula in the $C^{1+\alpha}$ setting assuming hyperbolicity. This  is used in some ongoing work by other authors (J.~Buzzi, S.~Crovisier, O.~Sarig). 

Note that A. Brown  \cite{Bro16} gives  this $C^{1+\alpha}$ generalization without the hyperbolicity assumption. More precisely, he gives a proof of a uniform bi-Lipschitz property of the stable holonomies inside center-unstable manifolds. However, his argument is technical and only a preprint at the time we are writing this. Hence we believe that our simple, half-page argument has some interest.

\section{Basic properties}\label{basic properties}
In this section, we list some basic results that will be used later.

\begin{Lemma}\label{four functions are well defined}
	Let $f$ be a $C^{1+\alpha}$ diffeomorphism on a compact manifold $M$. Let $\mu$ be an ergodic measure. Then the following four functions are constant almost everywhere.
	$$\inf_{\rho}\vl(f,W^{i}_{\rho}(x)),$$
	$$\inf_{\rho}\lim_{\varepsilon\to 0}\liminf_{n\to+\infty}\frac{1}{n}\log \#E(n,\varepsilon,W^{i}_{\rho}(x)),$$
	$$\inf_{\lambda}\lim_{\varepsilon\to 0}\limsup_{n\to+\infty}\frac{1}{n}\log N_{\lambda}(\mu^{i}_{x},n,\varepsilon),$$
	$$\inf_{\lambda}\lim_{\varepsilon\to 0}\liminf_{n\to+\infty}\frac{1}{n}\log N_{\lambda}(\mu^{i}_{x},n,\varepsilon).$$
\end{Lemma}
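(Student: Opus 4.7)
The plan is to exploit the ergodicity of $\mu$: a measurable function on $M$ is $\mu$-a.e.\ constant if and only if it agrees $\mu$-a.e.\ with its composition with $f$. I would therefore treat each of the four functions $F(x)$ separately and show $F\circ f=F$ on a full $\mu$-measure subset of $R_\mu$.

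For the first two functions I would use a sandwich argument. Fix $x\in R_\mu$ and $\rho>0$. Because $f$ is a diffeomorphism that preserves the global unstable leaves, there exist positive radii $\rho'=\rho'(x,\rho)$ and $\rho''=\rho''(x,\rho)$ with
$$W^i_{\rho'}(f(x))\subset f\bigl(W^i_\rho(x)\bigr)\quad\text{and}\quad f\bigl(W^i_{\rho''}(x)\bigr)\subset W^i_\rho(f(x)).$$
The first inclusion gives $\vol\bigl(f^n(W^i_{\rho'}(f(x)))\bigr)\le\vol\bigl(f^{n+1}(W^i_\rho(x))\bigr)$, and the second gives the symmetric bound. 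After $\tfrac1n\log^+$ and $\liminf_n$, the single-step time shift is absorbed, yielding $\vl(f,W^i_{\rho'}(f(x)))\le\vl(f,W^i_\rho(x))$ and conversely; taking $\inf_\rho$ on both sides delivers $f$-invariance of the first function. The topological-entropy function is handled identically, using that $f$ sends an $(n,\varepsilon)$-separated subset of $W^i_{\rho''}(x)$ to an $(n-1,\varepsilon)$-separated subset of $W^i_\rho(f(x))$ (up to a uniform Lipschitz constant that disappears in the limit $\varepsilon\to 0$).

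For the upper and lower fibered Katok entropies, the same scheme applies, but with the additional ingredient that one must track the conditional measures. The essential step is the standard fact from Rohlin's theory of measurable partitions: although $\xi^i$ is not $f$-invariant, both $\xi^i(f(x))$ and $f(\xi^i(x))$ are open neighborhoods of $f(x)$ inside the same leaf $W^i(f(x))$, so by essential uniqueness of conditional measures, on any Borel set $U$ contained in their intersection there exists a positive measurable constant $c(x)$ with
$$\mu^i_{f(x)}\big|_U=c(x)\cdot f_*\bigl(\mu^i_x\big|_{f^{-1}(U)}\bigr).$$
Combined with the inclusion $f\bigl(V^i(x,n,\varepsilon)\bigr)\subset V^i(f(x),n-1,\varepsilon)$, any set $C$ realizing $N_\lambda(\mu^i_x,n,\varepsilon)$ produces, after intersection with $f^{-1}(U)$, a set of the same cardinality witnessing $N_{\lambda'}(\mu^i_{f(x)},n-1,\varepsilon)$ for some $\lambda'$ differing from $\lambda$ only by a factor depending on $x$ (and on the $\mu^i_x$-mass of the common neighborhood). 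Both the factor $c(x)$ and the shift of $n$ by one disappear after applying $\tfrac1n\log$ and $\liminf_n$ or $\limsup_n$; taking $\lim_{\varepsilon\to 0}$ and then $\inf_\lambda$ gives the required $f$-invariance of $\overline h^K_i$ and $\underline h^K_i$.

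The main obstacle is the conditional-measure comparison in the third paragraph. Since $\xi^i$ is merely subordinate (not $f$-invariant), one has to verify that $\mu^i_{f(x)}$ and $f_*\mu^i_x$ really agree up to normalization on a common neighborhood, with $c(x)$ measurable and positive on a full-measure set; this is a routine application of Rohlin's disintegration theorem applied to the common refinement $\xi^i\wedge f\xi^i$, but it must be stated carefully because the atoms of $\xi^i$ and $f\xi^i$ need not coincide. Once this local proportionality is secured, the remaining steps are bookkeeping with $\tfrac1n\log$ that does not affect the exponential rate, and ergodicity of $\mu$ closes the argument for all four quantities.
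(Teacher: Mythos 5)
Your overall strategy matches the paper's one-line proof: show $f$-invariance a.e.\ and invoke ergodicity. Your sandwich argument for the volume growth and the separated-set growth is correct; note that only one of the two inequalities is actually needed, since if $F\circ f\geq F$ a.e.\ then each level set $\{F\leq M\}$ satisfies $f^{-1}\{F\leq M\}\subset\{F\leq M\}$ mod $\mu$, hence by $f$-invariance of $\mu$ is invariant mod $0$ and of measure $0$ or $1$, so $F$ is a.e.\ constant.

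For the two fibered Katok entropies, however, your argument has a genuine gap, and it sits exactly in the ``bookkeeping'' you defer. The local proportionality is fine: for an increasing subordinate partition $\xi^i(f(x))\subset f\xi^i(x)$, so one can take $U=\xi^i(f(x))$ and $c(x)=1/\mu^i_x(f^{-1}\xi^i(f(x)))\geq1$. But you push a cover of $\xi^i(x)$ forward to $\xi^i(f(x))$, and the $\mu^i_x$-mass outside $f^{-1}U$ is lost. From $\mu^i_x\bigl(\bigcup_{y\in C}V^i(y,n,\varepsilon)\bigr)\geq\lambda$ one only obtains
$$\mu^i_{f(x)}\Bigl(\bigcup_{y\in C}V^i(f(y),n-1,\varepsilon)\Bigr)\;\geq\;c(x)\bigl(\lambda-(1-1/c(x))\bigr)\;=\;c(x)\lambda-c(x)+1,$$
an \emph{affine} function of $\lambda$ that vanishes at $\lambda_0(x)=1-1/c(x)$, which is in general strictly positive. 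So $\lambda'$ is not ``$\lambda$ up to a factor'': as $\lambda'\to0^+$ the original $\lambda$ tends to $\lambda_0(x)$, not to $0$, and since $\lambda\mapsto\lim_\varepsilon\liminf_n\frac1n\log N_\lambda(\mu^i_x,n,\varepsilon)$ is non-decreasing in $\lambda$ you end up bounding $G(f(x))$ by $\lim_{\lambda\to\lambda_0^+}(\cdots)(x)$, which does not compare to $G(x)=\lim_{\lambda\to0^+}(\cdots)(x)$.

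The remedy is to transport covers in the opposite direction, where no mass escapes. Given $C'$ with $\mu^i_{f(x)}\bigl(\bigcup_{y'\in C'}V^i(y',n,\varepsilon)\bigr)\geq\lambda$, set $C=f^{-1}C'$. From $V^i(y',n,\varepsilon)\subset f\bigl(V^i(f^{-1}y',n+1,L\varepsilon)\bigr)$ with $L$ a Lipschitz constant of $f^{-1}$, together with $\mu^i_{f(x)}(fA)=c(x)\,\mu^i_x\bigl(A\cap f^{-1}\xi^i(f(x))\bigr)\leq c(x)\,\mu^i_x(A)$, one gets $\mu^i_x\bigl(\bigcup_{y\in C}V^i(y,n+1,L\varepsilon)\bigr)\geq\lambda/c(x)$. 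Now the new parameter is a genuine multiple $\lambda/c(x)$, which tends to $0$ with $\lambda$, giving $G(x)\leq G(f(x))$ a.e.; the level-set argument above then concludes. (Also: the common refinement of two partitions should be $\xi^i\vee f\xi^i$, not $\xi^i\wedge f\xi^i$.)
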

\begin{proof}
		By defintion, one can check that these functions are $f$-invariant. Hence by ergodicity, they are constant almost everywhere.
\end{proof}

Recall that $r(x)>0,\,x\in R_{\mu}$ is such that 
$W^{i}_{r(x)}(x)$ is an embedded sub-manifold. Indeed, by Pesin theory, we can assume for any $x\in R_{\mu}$, $r(x)$ is such that $$\lim_{n\to+\infty}\frac{1}{n}\log r(f^{n}(x))=0.$$
In light of this, we introduce in the following a collection of results in classical Pesin theory. For more detail, see section 8 in  \cite{LeY85} and Proposition 3.3 in  \cite{LeS82}.

\begin{Lemma}\label{pesin theory}
	Let $f$ be a $C^{1+\alpha}$ diffeomorphism on a compact manifold $M$ and let $\mu$ be an ergodic measure. List the positive Lyapunov exponents of $\mu$ as $\lambda_{1}>\lambda_{2}>\cdots>\lambda_{u}$. For any $\varepsilon>0$, we can find an increasing sequence of measurable sets $\Lambda^{\varepsilon}_{1}\subset \Lambda^{\varepsilon}_{2}\subset \cdots
	\subset\Lambda^{\varepsilon}_{k}\cdots\subset R_{\mu}$ and a sequence of numbers $\{r_{k}\}_{k\geq 1}$ with $0<r_{k}<1$ and $r_{k}\to 0$ such that 
	\begin{itemize}
		\item  $\cup_{k}\Lambda^{\varepsilon}_{k}=R_{\mu}$.
		\item $f^{n}(\Lambda^{\varepsilon}_{k})\subset\Lambda^{\varepsilon}_{k+n},\,\forall \,k,\,n\geq 1.$
		\item For any $x\in\Lambda^{\varepsilon}_{k}$, $r_{k}\leq r(x)$ and any $y\in W^{i}_{r_{k}}(x), 1\leq i\leq u$, $$d(f^{-n}(x),f^{-n}(y))\leq r_{k}^{-1}e^{-n(\lambda_{i}-\varepsilon)}d(x,y),\quad \forall\,n\geq 0.$$
		\item There is a constant $K$ such that for $k\geq 1$, $x\in\Lambda^{\varepsilon}_{k},\,\rho\leq r_{k}$ and $1\leq i\leq u$, $$\vol(W^{i}_{\rho}(x))\leq K\cdot \rho^{\sum_{j=1}^{i}\dim E^{j}}.$$
		\item $e^{-\varepsilon}\leq r_{k+1}/r_{k}\leq e^{\varepsilon}$,\,$\forall\,k\geq 1$.

	\end{itemize}

\end{Lemma}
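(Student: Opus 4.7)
The plan is to assemble Lemma \ref{pesin theory} from standard Pesin theory, following Section~8 of \cite{LeY85} and Proposition~3.3 of \cite{LeS82}. The central ingredient is the choice of an $\varepsilon$-tempered measurable function $L^\varepsilon\colon R_\mu\to[1,\infty)$ that quantifies, at each Oseledets-regular point, the failure of the hyperbolic estimates to be uniform. Concretely, $L^\varepsilon(x)$ is chosen so that along each Oseledets subspace $E^j_x$ the operator norms of $Df^n_x$ and $Df^{-n}_x$ are bounded by $L^\varepsilon(x)\,e^{|n|(\lambda_j\pm\varepsilon/2)}$ for every $n\in\mathbb{Z}$, and so that the angles between the Oseledets bundles are controlled from below by $L^\varepsilon(x)^{-1}$. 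Oseledets' theorem guarantees such a function exists and is tempered: $e^{-\varepsilon}\le L^\varepsilon(f(x))/L^\varepsilon(x)\le e^\varepsilon$ almost everywhere.

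Next, I would set $r_k:=c_0\,e^{-k\varepsilon}$ with a small constant $c_0>0$ and define
$$\Lambda^\varepsilon_k\;:=\;\bigl\{\,x\in R_\mu \;:\; L^\varepsilon(x)\le c_0/r_k\,\bigr\}.$$
The covering $\bigcup_k\Lambda^\varepsilon_k=R_\mu$ follows from $L^\varepsilon<\infty$ a.e., and the nesting $f^n(\Lambda^\varepsilon_k)\subset\Lambda^\varepsilon_{k+n}$ is immediate from temperedness of $L^\varepsilon$. The ratio condition $e^{-\varepsilon}\le r_{k+1}/r_k\le e^\varepsilon$ is built into the choice of $r_k$.

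For the contraction estimate, I would invoke the Pesin unstable manifold theorem in Lyapunov charts: on $\Lambda^\varepsilon_k$ the local unstable manifold $W^i_{r_k}(x)$ is an embedded $C^{1+\alpha}$ disk tangent to $E^1_x\oplus\cdots\oplus E^i_x$, on which $Df^{-1}$ contracts by at least $e^{-(\lambda_i-\varepsilon)}$ up to a multiplicative distortion bounded by $L^\varepsilon(x)\lesssim r_k^{-1}$. Passing from the intrinsic metric on $W^i(x)$ to the ambient metric $d$ costs only a uniformly bounded factor, which may again be absorbed in the $r_k^{-1}$ prefactor; iterating gives the stated inequality. For the volume bound, on $\Lambda^\varepsilon_k$ the disk $W^i_\rho(x)$ is the graph of a $C^{1+\alpha}$ map from a ball in $E^1_x\oplus\cdots\oplus E^i_x$ of radius comparable to $\rho$ into the complementary subspace, with uniformly bounded derivative, so its volume is bounded by $K\cdot\rho^{\sum_{j\le i}\dim E^j}$ with $K$ depending only on the ambient geometry.

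The only substantive step is the construction of the tempered Pesin function $L^\varepsilon$, which is where the $C^{1+\alpha}$ hypothesis enters through the unstable manifold theorem and the H\"older-continuous variation of Oseledets charts. Since every ingredient is classical and available in the cited references, the main care in writing the proof is bookkeeping: reconciling the intrinsic-versus-ambient metric conventions (as already flagged following the definition of $h_i(f,\mu)$) so that all constants indeed depend only on the block index $k$ and not on the base point $x\in\Lambda^\varepsilon_k$.
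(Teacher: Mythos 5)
The paper does not give its own proof of this lemma; it simply records it as a collection of standard Pesin-theory facts with references to Section~8 of Ledrappier--Young (II) and Proposition~3.3 of Ledrappier--Strelcyn, and remarks that one may take $r_k=e^{-\varepsilon k}$. Your reconstruction---build an $\varepsilon$-tempered Pesin function $L^\varepsilon$, set $r_k=c_0e^{-k\varepsilon}$, take $\Lambda^\varepsilon_k=\{L^\varepsilon\le c_0/r_k\}$, and read off the covering, nesting, ratio, contraction, and volume estimates from the Lyapunov-chart form of the unstable manifold theorem---is exactly the standard argument those references contain, so you are on the same route as the paper.

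One small point worth tightening if you write this out in full: the lemma requires $r_k\le r(x)$ on $\Lambda^\varepsilon_k$, where $r(x)$ is the tempered radius fixed earlier so that $W^i_{r(x)}(x)$ is embedded. Since $r(\cdot)$ can be chosen comparable to $L^\varepsilon(\cdot)^{-1}$, your definition $\Lambda^\varepsilon_k=\{L^\varepsilon\le c_0/r_k\}$ does yield this after shrinking $c_0$, but you should state it explicitly rather than leave it implicit in the phrase ``embedded $C^{1+\alpha}$ disk.'' Likewise, the ambient-versus-intrinsic metric comparison you flag is exactly the kind of bookkeeping the paper's remark about ``small cones around $x$'' is alluding to, and absorbing it into the $r_k^{-1}$ prefactor is the right move.
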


\begin{Remark}
	\begin{itemize}$ $
		\item Here, for example, one can choose $r_k=e^{-\varepsilon k}$.
		\item Note that $W^{i}(x)$ is tangent to $\sum_{l=1}^{i}\dim E^{l}_{x}$ at $x$. These small numbers $\{r_{k}\}_{k\geq 1}$ indicate the size of Pesin charts. When $W^{i}_{\rho}(x)$ is in the Pesin chart of $x$, we can assume it is contained in a small cone around $x$ and therefore its volume is determined by its radius up to a uniform constant $K$.
	\end{itemize}
\end{Remark}

A standard Pesin Theory (e.g. remarks below Lemma 2.2.3 in  \cite{LeY85I}) shows that:
\begin{Lemma}\label{hyperbolicity implies points always in chart are in unstable manifold}
	Let $f$ be a $C^{1+\alpha}$ diffeomorphism on a compact manifold $M$ and let $\mu$ be an hyperbolic ergodic measure. Given $\varepsilon>0$ and $x\in R_{\mu}$, assume $x\in\Lambda^{\varepsilon}_{k}$ for some $k$. 
	Then 
	$$S^{cu}(x)\subset W^{u}(x)\quad \text{where}\quad S^{cu}(x)\triangleq\{y\in M\,\big|\,d(f^{-n}(x),f^{-n}(y))\leq r_{k}e^{-n\varepsilon},\,\forall\,n\geq 0\}.$$
\end{Lemma}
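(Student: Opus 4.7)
This statement is a special case of the classical Pesin stable manifold characterization applied to $g := f^{-1}$. Since $\mu$ is ergodic and hyperbolic for $f$, it is so for $g$, with Lyapunov exponents sign-flipped; the stable subspace of $g$ at $x$ is $E^u_x$, and the local Pesin stable manifold of $g$ at $x$ coincides as a set with $W^u(x)$. My plan is to verify that the hypothesis on $y$ places its $g$-orbit inside the Pesin charts along the orbit of $x$, and then to invoke a Hadamard--Perron graph argument to force $y$ onto the local unstable manifold.

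First I would exploit the tempered nature of the Pesin blocks: although the excerpt only records $f^{n}(\Lambda^{\varepsilon}_{k}) \subset \Lambda^{\varepsilon}_{k+n}$, the analogous backward inclusion $f^{-n}(\Lambda^{\varepsilon}_{k}) \subset \Lambda^{\varepsilon}_{k+n}$ is part of the same standard construction. Hence for $x \in \Lambda^{\varepsilon}_{k}$ and $n \geq 0$, the point $f^{-n}(x)$ belongs to $\Lambda^{\varepsilon}_{k+n}$ and admits a Pesin chart of size $r_{k+n}$. With the canonical choice $r_{k} = e^{-\varepsilon k}$, the hypothesis $d(f^{-n}(x), f^{-n}(y)) \leq r_{k} e^{-n\varepsilon}$ rewrites as $d(f^{-n}(x), f^{-n}(y)) \leq r_{k+n}$, so $f^{-n}(y)$ sits in the Pesin chart of $f^{-n}(x)$ for every $n \geq 0$.

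Next I would run the graph transform in these charts. In the chart at $f^{-n}(x)$, the local unstable manifold $W^{u}_{r_{k+n}}(f^{-n}(x))$ is a Lipschitz graph over $E^{u}_{f^{-n}(x)}$, and the transverse direction $E^{s}_{f^{-n}(x)}$ is expanded by $g$ at rate at least $e^{-\lambda_{u+1}-\varepsilon} > 1$, provided $\varepsilon < |\lambda_{u+1}|$. The invariant stable cone field for $g$ around $E^{s}$, inherent to the Pesin structure, then controls the deviation of $y$ from this graph: if the $E^{s}$-component of $y$ relative to the graph were nonzero, it would grow by at least $e^{-\lambda_{u+1}-2\varepsilon}$ per iterate of $g$, eventually pushing $f^{-n}(y)$ out of the chart of $f^{-n}(x)$. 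This contradicts $d(f^{-n}(x), f^{-n}(y)) \leq r_{k+n}$, so the deviation must vanish and $y \in W^{u}_{r_{k}}(x) \subset W^{u}(x)$.

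The only real obstacle is the routine bookkeeping behind the Hadamard--Perron graph argument in Pesin charts; once $\varepsilon$ is chosen smaller than a fixed fraction of $\min_{j>u} |\lambda_{j}|$, the uniform cone and Lipschitz estimates the argument needs are available from the standard Pesin-theoretic references cited just before the statement.
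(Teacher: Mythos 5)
Your sketch is essentially correct and is the standard argument behind this fact. The paper itself gives no proof: it merely cites ``remarks below Lemma~2.2.3 in \cite{LeY85I}'' and treats the lemma as standard Pesin theory, and what you have reconstructed (backward-temperedness of the blocks so that $f^{-n}(y)$ stays in the Lyapunov chart of $f^{-n}(x)$, then the Hadamard--Perron/cone-field argument showing that any component transverse to the unstable graph would be expanded by $f^{-1}$ at rate $\approx e^{|\lambda_{u+1}|-O(\varepsilon)}$, eventually escaping the chart) is exactly the mechanism those remarks rely on. The one implicit hypothesis you should make explicit is that $\varepsilon$ must be small compared to $\min_{j>u}|\lambda_j|$ for the Pesin block construction and the expansion estimate to work; this is standard and consistent with how the lemma is used in the paper (where one lets $\varepsilon\to 0$), but since the lemma as written reads ``given $\varepsilon>0$,'' it is worth flagging. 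Also note that, with only $e^{-\varepsilon}\le r_{k+1}/r_k\le e^{\varepsilon}$ assumed, the inequality $r_k e^{-n\varepsilon}\le r_{k+n}$ holds automatically, so you do not need to specialize to $r_k=e^{-\varepsilon k}$ to get $f^{-n}(y)$ into the chart.
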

\begin{Remark}
	Roughly speaking, $S^{cu}(x)$ above is just the set of points whose backward trajectory always stays in the same Pesin chart of the backward trajectory of $x$. Hence in general, $S^{cu}(x)$ is the local center unstable manifold of $x$. But when the measure is hyperbolic, the above lemma says that $S^{cu}(x)$ reduces to the local unstable manifold.

\end{Remark}

For two measurable partitions $\xi$ and $\eta$, $\xi\vee\eta$ denotes the partition $\{\xi(x)\cap\eta(x)\}_{x\in R_{\mu}}$ and $\xi^{+}=\vee_{n=0}^{+\infty}f^{n}\xi$. Let $h_{\mu}(f,\xi)$ denote the entropy of $\xi$ w.r.t. $f$  and let $H_{\mu}(\xi|\eta)$ denote the mean conditional entropy.

The following result of Ledrappier and Young justifies the definition of the entropies along unstable foliations.

\begin{Proposition}[Proposition 7.2.1 in  \cite{LeY85}]\label{well defined hi}
	Let $f$ be a $C^{1+\alpha}$ diffeomorphism on a compact manifold $M$ and let $\mu$ be an ergodic measure. For $1\leq i\leq u$ and for any increasing partition $\xi^{i}$ subordinate to $W^{i}$  and for $\mu$ almost every point $x$, $$h_{i}(f,\mu)\triangleq\lim_{\tau\to 0}\liminf_{n\to+\infty}-\frac{1}{n}\log\mu^{i}_{x}(V^{i}(x,n,\tau))=\lim_{\tau\to 0}\limsup_{n\to+\infty}-\frac{1}{n}\log\mu^{i}_{x}(V^{i}(x,n,\tau))=H_{\mu}(\xi^{i}|f\xi^{i}).$$
\end{Proposition}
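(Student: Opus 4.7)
The plan is to combine a fibered Shannon--McMillan--Breiman theorem for the conditional measures $\mu^i_x$ with Pesin-theoretic comparisons between dynamical balls in $W^i$ and atoms of the iterated partitions $f^{-n}\xi^i$.

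Since $\xi^i$ is increasing we have $f^{-n}\xi^i\succeq \xi^i$ for every $n\ge 0$, so the atoms $(f^{-n}\xi^i)(x)=f^{-n}(\xi^i(f^n x))$ form a nested decreasing sequence of subsets of $\xi^i(x)$ shrinking to $\{x\}$ inside $W^i(x)$. By $f$-invariance, $H_\mu(f^{-(n+1)}\xi^i|f^{-n}\xi^i)=H_\mu(f^{-1}\xi^i|\xi^i)=H_\mu(\xi^i|f\xi^i)$, so applying the Shannon--McMillan--Breiman theorem to the increasing sequence of refinements $\{f^{-n}\xi^i\}_{n\ge 0}$ conditioned on $\xi^i$ yields the pointwise limit
\[
\lim_{n\to\infty}-\frac{1}{n}\log \mu^i_x\bigl((f^{-n}\xi^i)(x)\bigr)=H_\mu(\xi^i|f\xi^i),\qquad \mu\text{-a.e. }x.
\]
A chain-rule computation on the common refinement of any two increasing subordinate partitions shows that the right-hand side is independent of the choice of $\xi^i$.

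The next step is to sandwich the dynamical balls between two such atoms. Fix $\varepsilon>0$ small, pick a Pesin block $\Lambda^\varepsilon_k$ of large measure, and restrict to $x\in \Lambda^\varepsilon_k$. Lemma \ref{pesin theory} provides comparability of the ambient and intrinsic $W^i$-metrics, together with contraction estimates $d(f^{-n}x,f^{-n}y)\le r_k^{-1}e^{-n(\lambda_i-\varepsilon)}d(x,y)$ and the corresponding forward-expansion bounds at rates at most $\lambda_1+\varepsilon$. Combining these with the sub-exponential decay of $r(f^n x)$ produces inclusions
\[
(f^{-(n+\ell_n)}\xi^i)(x)\ \subset\ V^i(x,n,\tau)\ \subset\ (f^{-(n-\ell_n)}\xi^i)(x),
\]
valid for $\tau$ sufficiently small in terms of $k$ and $\varepsilon$, for $n$ large, and with $\ell_n=o(n)$. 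Feeding these inclusions into the first step shows that both $\liminf$ and $\limsup$ of $-\frac{1}{n}\log\mu^i_x(V^i(x,n,\tau))$ lie within $O(\varepsilon)$ of $H_\mu(\xi^i|f\xi^i)$; letting first $\tau\to 0$ and then $\varepsilon\to 0$ produces the triple equality.

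The main obstacle lies in the geometric comparison of step two. The Bowen tubes $V^i(x,n,\tau)$ are anisotropic, contracting under $f^{-n}$ at different rates in the Oseledets subspaces $E^1,\dots,E^i$, whereas the partition atoms $(f^{-n}\xi^i)(x)$ have a geometry inherited from the (roughly isotropic) piece $\xi^i(f^n x)$. One must therefore show that the anisotropy affects $\mu^i_x$ only at sub-exponential scales, so that the conditional masses of the sandwiching sets coincide to leading exponential order; this is the non-trivial content borrowed from the Pesin geometry of unstable manifolds. A secondary technical point is that our $V^i$ is defined using the ambient metric $d$ rather than the intrinsic $W^i$-metric used in \cite{LeY85}, but uniform comparability of the two on each Pesin block ensures that this distinction disappears in the limit $\tau\to 0$.
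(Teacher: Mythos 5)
The paper does not reprove this proposition: it is stated with the bracketed attribution to Proposition~7.2.1 of \cite{LeY85} and no proof environment follows, only a remark explaining that the Ledrappier--Young argument in their Section~9 uses only Pesin theory and $C^{1+\alpha}$ smoothness. So there is no ``paper's own proof'' to compare against; your task reduces to whether your sketch is a faithful account of the Ledrappier--Young argument, and it essentially is. Your Step~1 (ergodic-theorem/SMB computation showing $-\tfrac1n\log\mu^i_x\bigl((f^{-n}\xi^i)(x)\bigr)\to H_\mu(\xi^i\,|\,f\xi^i)$, using the identity $\mu^i_x\bigl((f^{-j}\xi^i)(x)\bigr)/\mu^i_x\bigl((f^{-(j-1)}\xi^i)(x)\bigr)=\mu^i_{f^{j-1}x}\bigl((f^{-1}\xi^i)(f^{j-1}x)\bigr)$ and Birkhoff) and Step~2 (sandwiching Bowen tubes between atoms of $f^{-m}\xi^i$ at nearby times, controlled by Pesin blocks) are exactly the two ingredients in the Ledrappier--Young proof.

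Two small corrections. First, the anisotropy you flag as ``the main obstacle'' is not really a tension: $V^i(x,n,\tau)$ and $(f^{-m}\xi^i)(x)=f^{-m}\bigl(\xi^i(f^{m}x)\bigr)$ are \emph{both} backward images under $f$ of roughly round sets ($f^{n-1}V^i(x,n,\tau)$ has diameter $\lesssim\tau$, and $\xi^i(f^mx)$ is a bounded piece of $W^i(f^mx)$), so they are anisotropic in compatible ways; the real work is controlling the constants through the Pesin block machinery, not reconciling different shapes. Second, the clean form $\ell_n=o(n)$ is not quite how the sandwiching goes: the inner radius of $\xi^i$-atoms is only bounded below on a set of measure $1-\epsilon$, so the time $m$ for which $V^i(x,n,\tau)\subset(f^{-m}\xi^i)(x)$ must be chosen adaptively as a return time to a Pesin block, and a priori one only gets $m\in[n-O(\epsilon n),\,n]$; the error this introduces is $O(\epsilon)\cdot H_\mu(\xi^i|f\xi^i)$, which is killed by letting $\tau\to 0$ (hence $\epsilon\to 0$) at the end exactly as you indicate. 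With those adjustments your outline matches the cited argument.
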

\begin{Remark}$ $ \label{entropy for increasing partition}
	\begin{itemize}
		\item Note that the functions of $x$ that appears in Proposition \ref{well defined hi} are $f$-invariant and therefore constant $\mu$ almost everywhere. They do not depend on the choice of the subordinate partition $\xi^{i}$. So it is proper to denote them by $h_{i}(f,\mu)$. See Lemma 3.12 in  \cite{LeY85I} for more detail.
		\item Ledrappier and Young \cite{LeY85} assume $C^{2}$ smoothness. But their proof of Proposition \ref{well defined hi} in their section 9 only uses Pesin Theory and $C^{1+\alpha}$ smoothness.
		\item Since $\xi^{i}$ is increasing, i.e., $\xi^{i}(x)\subset f(\xi^{i}(f^{-1}(x)))$ for $\mu$-a.e. $x$, we always have $$h_{\mu}(f,\xi^{i})\triangleq H_{\mu}(\xi|f(\xi^{+}))=H_{\mu}(\xi^{i}|f\xi^{i}).$$
\end{itemize}

\end{Remark}

We say $\eta$ is finer than $\xi$ denoted by $\xi\leq\eta$ if $\eta(x)\subset \xi(x)$ for $\mu$-a.e. $x$. For partitions with finite mean entropy, the finer partition has larger entropy. The following is an extension of this property to non-finite partitions.

\begin{Lemma}[Property 8.7 in  \cite{Roh67}]\label{finer partition has larger entropy}
	Let $f$ be a homeomorphism on a compact metric space $X$. Assume $\mu$ is an $f$-invariant probability measure.
	Let $\xi,\eta$ be two measurable partitions(possibly with infinite mean entropy) with $\eta$ being finer than $\xi$.
	If the mean conditional entropy $H_{\mu}(\eta\,|\,f(\xi^{+}))$ is finite, then $h_{\mu}(f,\xi)\leq h_{\mu}(f,\eta).$
\end{Lemma}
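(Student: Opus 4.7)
\emph{Proof Proposal.} The plan is to unfold both sides via Rohlin's formula $h_\mu(f,\zeta) = H_\mu(\zeta \mid f\zeta^+)$ and apply the chain rule for conditional entropy. Since $\eta \vee \xi = \eta$ (because $\eta \geq \xi$) and $\xi \vee f\xi^+ = \xi^+$, the chain rule gives
\[
H_\mu(\eta \mid f\xi^+) \;=\; H_\mu(\xi \mid f\xi^+) + H_\mu(\eta \mid \xi^+) \;=\; h_\mu(f,\xi) + H_\mu(\eta \mid \xi^+).
\]
The hypothesis $H_\mu(\eta \mid f\xi^+) < \infty$ keeps every term finite; in particular, $f\eta^+ \geq f\xi^+$ forces $h_\mu(f,\eta) = H_\mu(\eta\mid f\eta^+) \leq H_\mu(\eta\mid f\xi^+) < \infty$. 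The target inequality $h_\mu(f,\xi) \leq h_\mu(f,\eta)$ then rearranges into
\[
I_\mu(\eta;\,f\eta^+ \mid f\xi^+) \;\leq\; H_\mu(\eta \mid \xi^+),
\]
where the left-hand side is the conditional mutual information $H_\mu(\eta\mid f\xi^+)-H_\mu(\eta\mid f\eta^+)$.

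If $\xi,\eta$ were both $f$-increasing (so $\xi^+ = \xi$ and $\eta^+ = \eta$), this would reduce to $I_\mu(\eta;\,f\eta \mid f\xi) \leq H_\mu(\eta \mid \xi)$, which is immediate from $f$-invariance of $\mu$: one has $I_\mu(\eta;\,f\eta\mid f\xi) = I_\mu(f^{-1}\eta;\,\eta\mid\xi)$, and this is in turn bounded by $H_\mu(\eta\mid\xi)$ by the elementary bound $I(A;B\mid C) \leq H(B\mid C)$. Combined with the chain-rule identity, this settles the $f$-increasing case cleanly.

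To handle general partitions, I would pass to the $f$-increasing versions $\xi^+,\eta^+$ via the identity $h_\mu(f,\zeta) = h_\mu(f,\zeta^+)$ (which follows from $\zeta^+ = \zeta \vee f\zeta^+$ and $(\zeta^+)^+ = \zeta^+$). The main obstacle is that the finiteness hypothesis does \emph{not} transfer cleanly: $H_\mu(\eta^+\mid f\xi^+)$ can be infinite even when $H_\mu(\eta\mid f\xi^+)$ is finite (this already fails for simple Bernoulli examples). My proposed remedy is to approximate $\eta^+$ from below by the finite joins $\eta^+_N := \bigvee_{k=0}^N f^k \eta$, each again $f$-increasing, apply the increasing-case bound to the pair $\xi^+ \leq \xi^+ \vee \eta^+_N$, and pass to the limit $N \to \infty$ using martingale convergence of the conditional entropies along the monotone refinement. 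The finite bound $H_\mu(\eta\mid f\xi^+)<\infty$ is exactly what provides the domination needed to justify the limit—this is where Rohlin's hypothesis is used in an essential, non-algebraic way, since the purely algebraic identities among the conditional entropies turn out to be tautologies equivalent to the statement itself.
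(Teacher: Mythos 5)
The paper does not prove this lemma---it simply cites Rohlin's Property 8.7---so your attempt is judged on its own terms. Your chain-rule reduction is correct: the hypothesis $H_\mu(\eta\mid f\xi^+)<\infty$ together with $\eta\vee\xi=\eta$ and $\xi\vee f\xi^+=\xi^+$ gives $h_\mu(f,\xi)=H_\mu(\eta\mid f\xi^+)-H_\mu(\eta\mid\xi^+)$, and the target becomes $I_\mu(\eta;\,f\eta^+\mid f\xi^+)\le H_\mu(\eta\mid\xi^+)$. Your $f$-increasing case is also sound; in fact it is sharp there: the deficit $H_\mu(\eta\mid\xi)-I_\mu(f^{-1}\eta;\eta\mid\xi)$ equals $H_\mu(\eta\mid f^{-1}\eta)=H_\mu(f\eta\mid\eta)=0$ when $\eta$ is increasing, so the increasing case actually yields the \emph{equality} $h_\mu(f,\xi)=h_\mu(f,\eta)$ (consistent with the independence of the Ledrappier--Young quantity from the choice of subordinate partition).

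There is a genuine gap in the reduction to the increasing case. The finite joins $\eta^+_N=\bigvee_{k=0}^N f^k\eta$ are \emph{not} $f$-increasing in general: $\eta^+_N\ge f\eta^+_N$ would require $\eta^+_N$ to refine $f^{N+1}\eta$, which fails whenever $\eta$ itself is not increasing, and the same objection applies to $\xi^+\vee\eta^+_N$. This cannot be patched within your scheme: if those partitions \emph{were} increasing, the increasing case would give the equality $h_\mu(f,\xi^+)=h_\mu(f,\xi^+\vee\eta^+_N)$, and since $(\xi^+\vee\eta^+_N)^+=\eta^+$ one has $h_\mu(f,\xi^+\vee\eta^+_N)=h_\mu(f,\eta)$ for every $N$, so your limiting argument would output $h_\mu(f,\xi)=h_\mu(f,\eta)$---false in general (take $\xi$ trivial and $\eta$ a finite generator of a positive-entropy Bernoulli shift). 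The strict inequality must therefore enter exactly through the non-increasingness of the interpolating partitions, and a different decomposition is required. One that works: set $\mathcal{D}_n=\bigvee_{k=1}^n f^k\eta\vee f^{n+1}\xi^+$, so $\mathcal{D}_0=f\xi^+$, $\mathcal{D}_n$ increases to $f\eta^+$, and by martingale convergence (using $H_\mu(\eta\mid f\xi^+)<\infty$) one gets $I_\mu(\eta;f\eta^+\mid f\xi^+)=\sum_{n\ge 0}I_\mu(\eta;f^{n+1}\eta\mid\mathcal{D}_n)$. By $f$-invariance the $n$-th summand equals $H_\mu\bigl(\eta\mid\bigvee_{j=-n}^{-1}f^j\eta\vee\xi^+\bigr)-H_\mu\bigl(\eta\mid\bigvee_{j=-n-1}^{-1}f^j\eta\vee\xi^+\bigr)$, so the series telescopes and is bounded by its top term $H_\mu(\eta\mid\xi^+)$, which is exactly the required inequality.
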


\begin{Remark}
Rokhlin's article \cite{Roh67} mainly discusses entropy theory for endormorphisms where most definitions and properties are stated by using $f^{-1}$. Since here we assume $f$ is a homeomorphism, our statement is parallel to the original statement of Property 8.7 in  \cite{Roh67}.
\end{Remark}
\begin{Lemma}\label{property of increasing partition}
	Let $f$ be a homeomorphism on a compact metric space $X$. Assume $\mu$ is an $f$-invariant probability measure.
	Let $\xi,\eta$ be two increasing measurable partitions with $h_{\mu}(f,\xi)<+\infty,h_{\mu}(f,\eta)<+\infty$. Then for any integer $n\geq 1$,$$h_{\mu}(f,\xi\vee\eta)=h_{\mu}(f,\xi\vee f^{n}\eta).$$
\end{Lemma}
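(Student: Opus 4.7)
The plan is to sandwich $h_\mu(f, \xi \vee f^n\eta)$ between two applications of Lemma \ref{finer partition has larger entropy}, once in each direction. The key preliminary observation is that both $\xi \vee \eta$ and $\xi \vee f^n\eta$ are themselves increasing partitions: the join of two increasing partitions is increasing (the homeomorphism $f$ commutes with intersections of atoms), and $f^k\beta$ inherits the increasing property from $\beta$ by pushing the defining inclusion forward under $f^k$. Consequently, the last bullet of Remark \ref{entropy for increasing partition} gives the convenient expression $h_\mu(f, \gamma) = H_\mu(\gamma | f\gamma)$ for each of these partitions.

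For the inequality $h_\mu(f, \xi \vee f^n\eta) \leq h_\mu(f, \xi \vee \eta)$: since $\eta$ is increasing we have $\eta \geq f\eta \geq \cdots \geq f^n\eta$, hence $\xi \vee \eta$ refines $\xi \vee f^n\eta$. Lemma \ref{finer partition has larger entropy} then delivers the inequality, provided one verifies the finiteness of
$$
H_\mu\!\left(\xi \vee \eta \,\big|\, f\xi \vee f^{n+1}\eta\right).
$$
I would prove this by the chain rule, bounding the expression by $H_\mu(\xi | f\xi) + H_\mu(\eta | f^{n+1}\eta)$, and then reducing the second summand via the telescoping identity
$$
H_\mu(\eta | f^{n+1}\eta) \;=\; \sum_{k=0}^{n} H_\mu(f^k\eta | f^{k+1}\eta) \;=\; (n+1)\, h_\mu(f, \eta),
$$
which uses that $\eta \geq f\eta \geq \cdots \geq f^{n+1}\eta$ together with $f$-invariance, so each summand equals $h_\mu(f, \eta) < \infty$.

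The reverse inequality proceeds analogously after a shift. From the definition $h_\mu(f, \alpha) = H_\mu(\alpha | f\alpha^+)$ the $f$-invariance of $\mu$ immediately yields $h_\mu(f, f^k\alpha) = h_\mu(f, \alpha)$, hence $h_\mu(f, \xi \vee f^n\eta) = h_\mu(f, f^{-n}\xi \vee \eta)$. Since $\xi$ is increasing, $\xi \leq f^{-1}\xi \leq \cdots \leq f^{-n}\xi$, so $f^{-n}\xi \vee \eta$ refines $\xi \vee \eta$. The symmetric finiteness check bounds $H_\mu(f^{-n}\xi \vee \eta | f\xi \vee f\eta)$ by $(n+1)\, h_\mu(f, \xi) + h_\mu(f, \eta) < \infty$, and Lemma \ref{finer partition has larger entropy} gives $h_\mu(f, \xi \vee \eta) \leq h_\mu(f, f^{-n}\xi \vee \eta) = h_\mu(f, \xi \vee f^n\eta)$. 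Combining the two inequalities closes the argument.

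The only delicate point is the verification of the finiteness hypothesis of Lemma \ref{finer partition has larger entropy}. The assumption $h_\mu(f, \xi), h_\mu(f, \eta) < +\infty$ enters precisely here, through the telescoping identity above; without the increasing property of $\xi$ and $\eta$ one could not reduce $H_\mu(\eta | f^{n+1}\eta)$ to $(n+1)\, h_\mu(f, \eta)$, so both hypotheses genuinely do work.
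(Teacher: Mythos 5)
Your proof is correct and takes essentially the same route as the paper: the forward inequality is obtained identically (join of increasing partitions is increasing, chain rule and telescoping estimate for the finiteness hypothesis, then Lemma \ref{finer partition has larger entropy}). Your reverse direction via the shift $h_\mu(f,\xi\vee f^n\eta)=h_\mu(f,f^{-n}\xi\vee\eta)$ is an equivalent rephrasing of the paper's device, which reuses the forward inequality with the substitution $(\xi_1,\eta_1)=(f^n\eta,\xi)$ and then invokes $h_\mu(f,f^n\xi\vee f^n\eta)=h_\mu(f,\xi\vee\eta)$. (Incidentally, your telescoping constant $(n+1)h_\mu(f,\eta)$ corrects a harmless typo in the paper's $n\,h_\mu(f,\eta)$.)
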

\begin{proof}
	Since  $\xi,\eta$ are increasing, for any $n\geq 1$, we have $$f^{n}\xi\vee f^{n}\eta\leq\xi\vee f^{n}\eta\leq\xi\vee\eta.$$

In order to apply Lemma \ref{finer partition has larger entropy}, we note that

$$\begin{aligned}
H_{\mu}(\xi\vee \eta\,|\,f((\xi\vee f^{n}\eta)^{+}))
&= H_{\mu}(\xi\vee \eta\,|\,f\xi\vee f^{n+1}\eta)\\
&= H_{\mu}(\xi\,|\,f\xi\vee f^{n+1}\eta)+ H_{\mu}( \eta\,|\,\xi\vee f^{n+1}\eta)\\
&\leq H_{\mu}(\xi\,|\,f\xi)+ H_{\mu}(\eta\,|\,f^{n+1}\eta)\\
&=h_{\mu}(f,\xi)+n h_{\mu}(f,\eta)\\
&<+\infty.
\end{aligned}$$

By Lemma \ref{finer partition has larger entropy}, we have $$h_{\mu}(f,\xi\vee f^{n}\eta)\leq h_{\mu}(f,\xi\vee\eta).$$

To conclude, we prove the converse inequality by applying the previous one to $\xi_{1}=f^{n}\eta$ and $\eta_{1}=\xi$, obtaining:
 $$
    h_{\mu}(f,\xi\vee\eta) = h_{\mu}(f,f^{n}\xi\vee f^{n}\eta)\leq h_{\mu}(f,\xi\vee f^{n}\eta).
 $$
\end{proof}

The following is an extension of Lemma 3.1.2 in \cite{LeY85I}. The main difference is that here we only assume one of the two partitions is subordinate. The proof is essentially identical to Lemma 3.1.2 in \cite{LeY85I}. For completeness, we present it.
\begin{Lemma}\label{extension of the relations between subordinate partitions}
	Let $f$ be a $C^{1+\alpha}$ diffeomorphism on a compact manifold $M$ and let $\mu$ be an ergodic measure. Let $\xi^{u}$ be an increasing partition subordinate to $W^{u}$. Assume $\beta$ is a measurable partition satisfying 
	\begin{enumerate}
		\item $\beta$ is increasing,
		\item for $\mu$-a.e. $x$, $\beta(x)\subset W^{u}(x),$
		\item for $\mu$-a.e. $x$, $\diam((f^{-n}(\beta))(x))\to 0.$
	\end{enumerate} 
		Then $$h_{\mu}(f,\xi^{u}\vee\beta)=h_{\mu}(f,\beta).$$ 
\end{Lemma}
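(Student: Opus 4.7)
The plan is to mirror the proof of Lemma~3.1.2 in \cite{LeY85I}. The easy inequality $h_\mu(f,\beta)\le h_\mu(f,\xi^u\vee\beta)$ would follow from Lemma~\ref{finer partition has larger entropy} applied to the refinement $\xi^u\vee\beta\ge\beta$; one checks the required finiteness of $H_\mu(\xi^u\vee\beta\mid f\beta)$ by bounding it by $h_\mu(f,\xi^u)+h_\mu(f,\beta)$ (if either is infinite, both sides of the lemma are infinite and nothing is to prove). For the reverse inequality, I would apply Lemma~\ref{property of increasing partition} with $\xi=\beta$ and $\eta=\xi^u$ to get
\[
h_\mu(f,\xi^u\vee\beta)=h_\mu(f,\beta\vee f^{n}\xi^u)\qquad\text{for every }n\ge 1.
\]
Write $\zeta_n:=\beta\vee f^{n}\xi^u$.

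The key geometric point, using hypotheses (2), (3) together with Pesin theory (Lemma~\ref{pesin theory}), is to show that for $\mu$-a.e.\ $x$ the atoms $\zeta_n(x)$ and $(f\zeta_n)(x)$ stabilize to $\beta(x)$ and $f\beta(x)$ respectively as $n\to\infty$. Indeed $\beta(x)\subset W^u(x)$ by (2), and backward contraction along unstable leaves gives $\diam(f^{-n}(\beta(x)))\to 0$ exponentially fast, while the Pesin radius $r(f^{-n}x)$ governing the size of $\xi^u(f^{-n}x)$ decays only sub-exponentially. Thus for every $n$ large enough (depending on $x$), $f^{-n}(\beta(x))\subset W^u_{r(f^{-n}x)}(f^{-n}x)\subset\xi^u(f^{-n}x)$, and pushing forward by $f^{n}$ yields $\beta(x)\subset(f^{n}\xi^u)(x)$, i.e.\ $\zeta_n(x)=\beta(x)$; the same reasoning applied at $fx$ (which also lies in the full-measure set provided by condition (3)) gives $(f\zeta_n)(x)=f\beta(x)$ for all sufficiently large $n$.

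To conclude, I would write $h_\mu(f,\zeta_n)=-\int\log P_n(x)\,d\mu(x)$ with $P_n(x):=\mu^{f\zeta_n}_x(\zeta_n(x))$. The stabilization implies that $P_n(x)=\mu^{f\beta}_x(\beta(x))$ for $\mu$-a.e.\ $x$ and all $n$ large enough depending on $x$; in particular the integrand converges $\mu$-a.e.\ to $-\log\mu^{f\beta}_x(\beta(x))$. Since $h_\mu(f,\zeta_n)$ is independent of $n$ by Lemma~\ref{property of increasing partition}, exchanging the limit and the integral yields
\[
h_\mu(f,\xi^u\vee\beta)=-\int\log\mu^{f\beta}_x(\beta(x))\,d\mu(x)=h_\mu(f,\beta).
\]

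The main obstacle is justifying this interchange, since a priori $-\log P_n$ need not be uniformly dominated by an integrable function on the shrinking \emph{bad} set $\{\zeta_n\neq\beta\}$. Following the strategy of \cite{LeY85I}, I would identify $\mu^{\beta}_x$ and $\mu^{\zeta_n}_x$, up to normalizing constants, with restrictions of the canonical leafwise measure on $W^u(x)$. This provides an integrable majorant for $-\log P_n$ expressed in terms of $h_\mu(f,\xi^u)+h_\mu(f,\beta)$, both of which are finite, which is exactly enough to pass to the limit.
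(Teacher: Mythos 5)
Your proposal matches the paper's strategy for most of its length: you invoke Lemma~\ref{property of increasing partition} in exactly the same way (with $\xi=\beta$, $\eta=\xi^u$) to replace $\xi^u\vee\beta$ by $\zeta_n=\beta\vee f^n\xi^u$, and the geometric observation that $\zeta_n(x)$ and $(f\zeta_n)(x)$ stabilize to $\beta(x)$ and $(f\beta)(x)$ respectively is precisely the paper's set $\Omega_n=\{x: (f\beta)(x)\subset (f^{n+1}\xi^u)(x)\}$, $\mu(\Omega_n)\to 1$. Up to that point the two arguments are essentially the same.

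The divergence, and the genuine gap, is in the final limit passage. You write $h_\mu(f,\zeta_n)=-\int\log P_n\,d\mu$ with $P_n(x)=\mu_x^{f\zeta_n}(\zeta_n(x))$ and try to pass to the limit directly. Fatou only gives
$$h_\mu(f,\xi^u\vee\beta)=\liminf_n\int(-\log P_n)\,d\mu \ \geq\ \int(-\log P_\infty)\,d\mu = h_\mu(f,\beta),$$
which is the \emph{easy} direction (finer partition has more entropy). The hard inequality $\leq$ requires an upper bound on $\int(-\log P_n)$, i.e.\ some form of domination or uniform integrability, and this is where your argument is incomplete. The fix you sketch---identifying $\mu_x^\beta$ and $\mu_x^{\zeta_n}$ with restrictions of a canonical leafwise measure to obtain an integrable majorant---is not available here: the conditional measures on unstable leaves of a general ergodic measure are \emph{not} absolutely continuous with respect to leaf volume (that is the defining property of SRB measures), so there is no canonical leafwise measure to compare against, and no such majorant is furnished by the hypotheses.

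The paper sidesteps the need for domination by decomposing the conditional information rather than treating $-\log P_n$ as a single block. Since $\zeta_n$ is increasing,
$$h_\mu(f,\zeta_n)=H_\mu(\zeta_n\,|\,f\zeta_n) = H_\mu\bigl(\beta\,\big|\,(f^{n+1}\xi^u)\vee f\beta\bigr)\ +\ H_\mu\bigl(\xi^u\,\big|\,(f\xi^u)\vee f^{-n}\beta\bigr),$$
using additivity of conditional entropy, $\beta\vee f\beta=\beta$, and $f$-invariance of $\mu$. The second summand tends to $0$: by hypothesis~(3), $f^{-n}\beta$ increases to the partition into points, the summand is bounded above by $H_\mu(\xi^u|f\xi^u)<\infty$, and one invokes Rokhlin's Property~5.11. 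The first summand is bounded above by $H_\mu(\beta|f\beta)=h_\mu(f,\beta)$ for every $n$ simply by monotonicity of conditional entropy (no limit needed), and bounded below in the limit by the very Fatou argument you have in mind, run on the stabilization sets $\Omega_n$. Thus both directions of the desired equality come out with no dominated convergence at all. I suggest you adopt this two-term decomposition; it is both shorter and avoids the ill-founded leafwise-measure step.
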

\begin{proof}
		Since both $\xi^{u}$ and $\beta$ are increasing and their entropies w.r.t. $f$ and $\mu$ are finite, by Lemma \ref{property of increasing partition}, for any $n\geq 1$, we have 
	$$\begin{aligned}
	h_{\mu}(f,\xi^{u}\vee\beta)
	&= h_{\mu}(f,(f^{n}\xi^{u})\vee\beta)\\
	&= H_{\mu}((f^{n}\xi^{u})\vee\beta\,|\,(f^{n+1}\xi^{u})\vee f\beta)\\
	&= H_{\mu}(\beta\,|\,(f^{n+1}\xi^{u})\vee f\beta)+ H_{\mu}( f^{n}\xi^{u}\,|\,(f^{n+1}\xi^{u})\vee \beta)\\
	&= H_{\mu}(\beta\,|\,(f^{n+1}\xi^{u})\vee f\beta)+ H_{\mu}( \xi^{u}\,|\,(f\xi^{u})\vee f^{-n}\beta).
	\end{aligned}$$
	
	By the third assumption on $\beta$, $f^{-n}\beta$ tends increasingly to the partition $\varepsilon$ into points. Note that $H_{\mu}( \xi^{u}\,|\,(f\xi^{u})\vee f^{-n}\beta)\leq H_{\mu}( \xi^{u}\,|\,f\xi^{u})<+\infty$, by Property 5.11 in \cite{Roh67}, the second term $H_{\mu}( \xi^{u}\,|\,(f\xi^{u})\vee f^{-n}\beta)$ above goes to $H_\mu(\xi^u\,|\,\varepsilon)=0$ as $n\to +\infty$. So it is sufficient to prove $H_{\mu}(\beta\,|\,(f^{n+1}\xi^{u})\vee f\beta)\to H(\beta\,|\,f\beta)=h_{\mu}(f,\beta).$ First note that $$H_{\mu}(\beta\,|\,(f^{n+1}\xi^{u})\vee f\beta)\leq H(\beta\,|\,f\beta).$$ 
	
 Write the conditional measures of $\mu$ w.r.t. $(f^{n+1}\xi^{u})\vee f\beta$ as $\{\mu^{n}_{x}\}_{x\in M}$ and the conditional measures w.r.t. $f\beta$ as $\{\mu_{x}\}_{x\in M}$. By definition, $$H_{\mu}(\beta\,|\,(f^{n+1}\xi^{u})\vee f\beta)=\int -\log\mu_{x}^{n}(\beta(x))\,d\,\mu(x),\quad H(\beta\,|\,f\beta)=\int-\log\mu_{x}(\beta(x))\,d\,\mu(x).$$ 
 
 	Let $$\Omega_{n}=\{x\,|\, f\beta(x)\subset f^{n+1}\xi^{u}(x)\}.$$ Since $\xi^{u}(x)$ contains an open neighborhood of $x$ in $W^{u}(x)$ w.r.t. the sub-manifold topology, by assumptions 2~and~3 on $\beta$, $\{\Omega_{n}\}$ is a non-decreasing sequence and $\mu(\Omega_{n})\to 1$ as $n\to +\infty$. For $x\in\Omega_{n}$, by definition, $(f^{n+1}\xi^{u})(x)\cap (f\beta)(x)=(f\beta)(x)$. Then one can show that this implies $$-\log\mu_{x}^{n}(\beta(x))=-\log\mu_{x}(\beta(x)),\quad \mu\,\, a.e.\,\,x\in\Omega_{n}.$$
 	
	Hence the non-negative functions $ \{-\log\mu_{(\cdot)}^{n}(\beta(\cdot))\}$ tend pointwise to $-\log\mu_{(\cdot)}(\beta(\cdot))$. By Fatou's Lemma, $$\lim_{n\to+\infty} H_{\mu}(\beta\,|\,(f^{n+1}\xi^{u})\vee f\beta)\geq H(\beta\,|\,f\beta).$$
\end{proof}

~\\
\section{Proof of Theorem \ref{main result}}
We prove the assertions in Theorem \ref{main result} one by one.
\subsection{$h_{i}(f,\mu)=\underline{h}_{i}^{K}(f,\mu)=\overline{h}_{i}^{K}(f,\mu)$}
We first prove $h_{i}(f,\mu)\leq\underline{h}_{i}^{K}(f,\mu)$.

By Proposition \ref{well defined hi} and by removing a set of zero measure from $R_{\mu}$ if necessary, we can assume that $$\lim_{\varepsilon\to 0}\limsup_{n\to+\infty}-\frac{1}{n}\log\mu^{i}_{x}(V^{i}(x,n,\varepsilon))=\lim_{\varepsilon\to 0}\liminf_{n\to+\infty}-\frac{1}{n}\log\mu^{i}_{x}(V^{i}(x,n,\varepsilon))=h_{i}(f,\mu),\quad\forall\,x\in R_{\mu}.$$	

We write $h_{i}(f,\mu)$ as $h_{i}$ for short.

For any $\eta,\varepsilon>0$, define $$\Delta^{\varepsilon}_{\eta}\triangleq \{x\in R_{\mu}\big|\,\liminf_{n\to+\infty}-\frac{1}{n}\log\mu^{i}_{x}(V^{i}(x,n,2\varepsilon))> h_{i}-\eta\}.$$  Then $\cup_{\varepsilon>0}\Delta^{\varepsilon}_{\eta}=R_{\mu}$. 

For $j\in\mathbb{N}$ and $p\in\Delta^{\varepsilon}_{\eta}$, define
$$\Delta^{\varepsilon}_{\eta}(p,j)\triangleq\{x\in\Delta_{\eta}^{\varepsilon}\big|\,\mu^{i}_{p}(V^{i}(x,n,2\varepsilon))\leq e^{-n(h_{i}-\eta)},\,\forall\,n\geq j\}.$$ By definition, $$\Delta^{\varepsilon}_{\eta}(p,j)\subset \Delta^{\varepsilon}_{\eta}(p,j+1),\quad\mu^{i}_{p}(\cup_{j}\Delta^{\varepsilon}_{\eta}(p,j))=\mu^{i}_{p}(\Delta^{\varepsilon}_{\eta}).$$ 

Fix any $\lambda>0$ and $p\in R_{\mu}$. Choose $\varepsilon$ small enough and $N$ large enough such that $$\mu^{i}_{p}(\Delta^{\varepsilon}_{\eta}(p,j))\geq 1-\frac{\lambda}{2},\quad\forall\,j\geq N.$$ For $n\in\mathbb{N}$, let $C_{n}\subset R_{\mu}$ be a subset such that $\# C_{n}=N_{\lambda}(\mu^{i}_{p},n,\varepsilon)$ and $\mu^{i}_{p}(\bigcup_{y\in C_{n}}V^{i}(y,n,\varepsilon))\geq\lambda$. Hence we have $$\mu^{i}_{p}(\Delta^{\varepsilon}_{\eta}(p,n)\cap (\bigcup_{y\in C_{n}}V^{i}(y,n,\varepsilon)))\geq \frac{\lambda}{2},\quad\forall\,n\geq N.$$

Let $A_{n}\subset C_{n}$ be such that for each $y\in C_{n}$, we have $V^{i}(y,n,\varepsilon)\cap \Delta^{\varepsilon}_{\eta}(p,n)\neq \emptyset$. For $y\in A_{n}$, we fix any $\widetilde{y}\in V^{i}(y,n,\varepsilon)\cap \Delta^{\varepsilon}_{\eta}(p,n)$. Then we have $$V^{i}(y,n,\varepsilon)\subset V^{i}(\widetilde{y},n,2\varepsilon).$$ Hence for $n\geq N$, $$\frac{\lambda}{2}\leq \mu^{i}_{p}(\bigcup_{y\in A_{n}}V^{i}(\widetilde{y},n,2\varepsilon))\leq N_{\lambda}(\mu^{i}_{p},n,\varepsilon)\times\sup_{x\in \Delta^{\varepsilon}_{\eta}(p,n)}\mu^{i}_{p}(V^{i}(x,n,2\varepsilon))\leq N_{\lambda}(\mu^{i}_{p},n,\varepsilon)\times e^{-n(h_{i}-\eta)}.$$

Therefore for any $\varepsilon$ small enough(depending on $\eta$) and $n$ large enough, $$N_{\lambda}(\mu^{i}_{p},n,\varepsilon)\geq\frac{\lambda}{2}\cdot e^{n(h_{i}-\eta)}.$$
Then by the arbitrariness of $\eta$ and $\lambda$, we get $$h_{i}(f,\mu)\leq \inf_{\lambda}\lim_{\varepsilon\to 0}\liminf_{n\to+\infty}\frac{1}{n}\log N_{\lambda}(\mu^{i}_{p},n,\varepsilon)=\underline{h}_{i}^{K}(f,\mu).$$
\\[0.5cm]
Next we prove $h_{i}(f,\mu)\geq\overline{h}_{i}^{K}(f,\mu)$. The arguments are similar as above.

For any $\eta,\varepsilon>0$, define $$\Omega^{\varepsilon}_{\eta}\triangleq \{x\in R_{\mu}\big|\,\limsup_{n\to+\infty}-\frac{1}{n}\log\mu^{i}_{x}(V^{i}(x,n,\frac{\varepsilon}{2}))<h_{i}+\eta\}.$$  Then $\cup_{\varepsilon>0}\Delta^{\varepsilon}_{\eta}=R_{\mu}$. 

For $j\in\mathbb{N}$ and $p\in\Delta^{\varepsilon}_{\eta}$, define
$$\Omega^{\varepsilon}_{\eta}(p,j)\triangleq\{x\in\Delta_{\eta}^{\varepsilon}\big|\,\mu^{i}_{p}(V^{i}(x,n,\frac{\varepsilon}{2}))\geq e^{-n(h_{i}+\eta)},\,\forall\,n\geq j\}.$$ By definition, $$\Omega^{\varepsilon}_{\eta}(p,j)\subset \Omega^{\varepsilon}_{\eta}(p,j+1),\quad\mu^{i}_{p}(\cup_{j}\Omega^{\varepsilon}_{\eta}(p,j))=\mu^{i}_{p}(\Omega^{\varepsilon}_{\eta}).$$ 

Fix any $\lambda>0$ and $p\in R_{\mu}$. Choose $\varepsilon$ small enough and $N$ large enough such that $$\mu^{i}_{p}(\Omega^{\varepsilon}_{\eta}(p,j))\geq\lambda,\quad\forall\,j\geq N.$$

For $n\in\mathbb{N}$, let $F_{n}\subset \Omega^{\varepsilon}_{\eta}(p,n)$ be a maximal $(n,\varepsilon)$ separated set of $\Omega^{\varepsilon}_{\eta}(p,n)\cap \xi^{i}(p).$ Then $\{V^{i}(y,n,\varepsilon)\}_{y\in F_{n}}$ covers $\Omega^{\varepsilon}_{\eta}(p,n)\cap \xi^{i}(p).$ Hence $\#F_{n}\geq N_{\lambda}(\mu^{i}_{p},n,\varepsilon)$. And we also have $$y_{1},y_{2}\in F_{n},\, y_{1}\neq y_{2}\Longrightarrow V^{i}(y_{1},n,\frac{\varepsilon}{2})\bigcap V^{i}(y_{2},n,\frac{\varepsilon}{2})=\emptyset.$$ Hence for $n\geq N$, $$N_{\lambda}(\mu^{i}_{p},n,\varepsilon)\leq\# F_{n}\leq\frac{1}{\sup_{x\in\Omega^{\varepsilon}_{\eta}(p,n)}\mu^{i}_{p}(V^{i}(x,n,\frac{\varepsilon}{2}))}\leq e^{n(h_{i}+\eta)}.$$

Then by the arbitrariness of $\eta$ and $\lambda$, we get $$h_{i}(f,\mu)\geq \inf_{\lambda}\lim_{\varepsilon\to 0}\limsup_{n\to+\infty}\frac{1}{n}\log N_{\lambda}(\mu^{i}_{p},n,\varepsilon)=\overline{h}_{i}^{K}(f,\mu).$$

\subsection{$h_{i}(f,\mu)\leq \underline{h}^{i}_{\top}(f,\mu)$}

Since $\xi^{i}$ is a partition subordinate to $W^{i}$, for any $\rho>0$, we assume for any $x\in R_{\mu}$, $\mu^{i}_{x}(W^{i}_{\rho}(x))>0.$

For $\rho,\varepsilon,\eta>0, n,j\in\mathbb{N}$ and $p\in R_{\mu}$, let $F_{\eta}^{\varepsilon}(p,j,n)$ be a  $(n,\varepsilon)$-separated subset of $\Delta^{\varepsilon}_{\eta}(p,j)\cap W^{i}_{\rho}(p)$ with maximum cardinality. It is a cover, hence we have 
$$\begin{aligned}
\mu^{i}_{p}(\Delta^{\varepsilon}_{\eta}(p,j)\cap W^{i}_{\rho}(p))
&\leq \mu_{p}^{i}(\bigcup_{x\in F_{\eta}^{\varepsilon}(p,j,n)}V^{i}(x,n,\varepsilon))\\
&\leq \# F_{\eta}^{\varepsilon}(p,j,n)\times\sup_{x\in\Delta^{\varepsilon}_{\eta}(p,j)}\mu^{i}_{p}(V^{i}(x,n,\varepsilon))\\
&\leq \# F_{\eta}^{\varepsilon}(p,j,n)\times e^{-n(h_{i}-\eta)},\quad\forall\,j,\,\forall\, n\geq j.
\end{aligned}$$

Hence $$\# F_{\eta}^{\varepsilon}(p,j,n)\geq\frac{\mu^{i}_{p}(\Delta^{\varepsilon}_{\eta}(p,j)\cap W^{i}_{\rho}(p))}{e^{n(h_{i}-\eta)}},\quad\forall\,j,\,\forall\, n\geq j.$$
Choose $\varepsilon$ small enough (depending on $\eta$) and $j$ large enough such that $\mu^{i}_{p}(\Delta^{\varepsilon}_{\eta}(p,j)\cap W^{i}_{\rho}(x))>0$. Then taking $\liminf_{n\to+\infty}\frac{1}{n}\log$ on both sides, we have $$\liminf_{n\to+\infty}\frac{1}{n}\log \#E(n,\varepsilon,W^{i}_{\rho}(x))\geq\liminf_{n\to+\infty}\frac{1}{n}\log \# F_{\eta}^{\varepsilon}(p,j,n)\geq h_{i}-\eta.$$
Since $\eta$ and $\rho$ are arbitrary, we get $ h_{i}(f,\mu)\leq\underline{h}^{i}_{\top}(f,\mu)$.

\subsection{$h_{i}(f,\mu)\leq \vl_{i}(f,\mu)$}
Applying Lemma  \ref{pesin theory} for any $\varepsilon>0$, we obtain  an increasing sequence of measurable sets $\{\Lambda^{\varepsilon}_{k}\subset R_{\mu}\}$.

Let us first note that, for any $k,n\in\mathbb{N}$, any $x\in\Lambda^{\varepsilon}_{k}$, any $\tau\leq r_{k}e^{-n\varepsilon}$ and any $y$ with $f^{n}(y)\in W^{i}_{\tau}(f^{n}(x))$, $$d(f^{n-j}(x),f^{n-j}(y))\leq r_{k+n}^{-1}e^{-j(\lambda_{i}-\varepsilon)}d(f^{n}(x),f^{n}(y)),\,\forall\,0\leq j\leq n.$$ Hence for any $k,n\in\mathbb{N}$, any $x\in\Lambda^{\varepsilon}_{k}$ and any $\tau\leq r_{k}e^{-n\varepsilon}$, $f^{n}(V^{i}(x,n,\tau))$ contains an  $i$-th local sub-manifold $W^{i}_{r_{k+n}\tau}(f^{n}(x))$.

Since the function $\mu^{i}_{p}(V^{i}(p,n,\tau))$ is non-decreasing w.r.t. $\tau$, for any sequence $\{\tau_{n}\}$ with $\tau_{n}\to 0$, we have for $p\in R_{\mu}$,

$$\liminf_{n\to+\infty}-\frac{1}{n}\log\mu^{i}_{p}(V^{i}(p,n,\tau_{n}))\geq \liminf_{n\to+\infty}-\frac{1}{n}\log\mu^{i}_{p}(V^{i}(p,n,\tau)),\quad\forall\,\tau>0.$$

Hence in particular, for $k\in\mathbb{N},\,p\in\Lambda_{k}^{\varepsilon}$ and $ x\in\Lambda^{\varepsilon}_{k}\cap \xi^{i}(p),$ $$\liminf_{n\to+\infty}-\frac{1}{n}\log\mu^{i}_{p}(V^{i}(x,n,r_{k}e^{-n\varepsilon}))\geq h_{i}.$$

For any $j\in\mathbb{N},\,p\in\Lambda_{k}^{\varepsilon}$ and $\rho>0$ with $W^{i}_{\rho}(p)\subset\xi^{i}(p)$,  define $$\Lambda^{\varepsilon,p}_{k,j}=\{x\in\Lambda^{\varepsilon}_{k}\cap W^{i}_{\rho}(p)\,\big|\,\mu^{i}_{p}(V^{i}(x,n,r_{k}e^{-n\varepsilon}))\leq e^{-n(h_{i}-\varepsilon)},\,\forall\,n\geq j\}.$$ By definition, $$\Lambda^{\varepsilon,p}_{k,j}\subset \Lambda^{\varepsilon,p}_{k,j+1},\quad\mu^{i}_{p}(\bigcup_{j}\Lambda^{\varepsilon,p}_{k,j})=\mu^{i}_{p}(\Lambda^{\varepsilon}_{k}\cap W^{i}_{\rho}(p)).$$

Let $F_{n,j,k}^{\varepsilon,p}$ be a  $(n,r_{k}e^{-n\varepsilon})$-separated subset of $\Lambda^{\varepsilon,p}_{k,j}$ with maximum cardinality. Then we have
$$\begin{aligned}
\mu^{i}_{p}(\Lambda^{\varepsilon,p}_{k,j})
&\leq \mu^{i}_{p}(\bigcup_{x\in F_{n,j,k}^{\varepsilon,p}}V^{i}(x,n,r_{k}e^{-n\varepsilon}))\\
&\leq \# F_{n,j,k}^{\varepsilon,p}\times\sup_{x\in\Lambda^{\varepsilon,p}_{k,j}}\mu^{i}_{p}(V^{i}(x,n,r_{k}e^{-n\varepsilon})).
\end{aligned}\eqno(*)$$

Note that for any $x\in F_{n,j,k}^{\varepsilon,p}\subset W^{i}_{\rho}(p)$, $V^{i}(x,n,r_{k}e^{-n\varepsilon})\subset W^{i}_{2\rho}(p)$ for all $n$ such that $r_{k}e^{-n\varepsilon}<\rho$. Since the sets $\{f^{n}(V^{i}(x,n,\frac{1}{2}r_{k}e^{-n\varepsilon}))\}_{x\in F_{n,j,k}^{\varepsilon,p}}$ are mutually disjoint, for all large $n$, we have $$\vol(f^{n}(W^{i}_{2\rho}(x)))\geq\sum_{x\in F_{n,j,k}^{\varepsilon,p}}\vol(f^{n}(V^{i}(x,n,\frac{1}{2}r_{k}e^{-n\varepsilon}))).$$ Recall that each $f^{n}(V^{i}(x,n,\frac{1}{2}r_{k}e^{-n\varepsilon}))$ contains an  $i$-th local unstable manifold $W^{i}_{\frac{1}{2}r_{k+n}r_{k}e^{-n\varepsilon}}(f^{n}(x))$. Thus $$\vol(f^{n}(W^{i}_{2\rho}(x)))\geq\# F_{n,j,k}^{\varepsilon,p}\times K\times(\frac{1}{2}r_{k+n}r_{k}e^{-n\varepsilon})^{\sum_{l=1}^{i}\dim E^{l}}.\eqno(**)$$ where $K$ is the constant from Lemma \ref{pesin theory}.

Combining $(\ast)$ and $(\ast\ast)$, for all large $n$,
$$\begin{aligned}
\vol(f^{n}(W^{i}_{2\rho}(x)))
&\geq\frac{\mu^{i}_{p}(\Lambda^{\varepsilon,p}_{k,j})}{\sup_{x\in\Lambda^{\varepsilon,p}_{k,j}}\mu^{i}_{p}(V^{i}(x,n,r_{k}e^{-n\varepsilon}))}\times K\times(\frac{1}{2}r_{k+n}r_{k}e^{-n\varepsilon})^{\sum_{l=1}^{i}\dim E^{l}}\\
&\geq\frac{\mu^{i}_{p}(\Lambda^{\varepsilon,p}_{k,j})}{e^{-n(h_{i}-\varepsilon)}}\times K\times(\frac{1}{2}r_{k+n}r_{k}e^{-n\varepsilon})^{\sum_{l=1}^{i}\dim E^{l}},\,\,\,\,\forall\,k\in\mathbb{N},\,\forall\,p\in\Lambda_{k}^{\varepsilon},\,\forall\, j\in\mathbb{N}.
\end{aligned}$$

Since $\mu^{i}_{p}(W^{i}_{\rho}(p))>0$, we choose $k,j$ large enough such that $\mu^{i}_{p}(\Lambda^{\varepsilon,p}_{k,j})>0$. Taking $\liminf_{n\to+\infty}\frac{1}{n}\log$ on both sides, we have $$\liminf_{n\to+\infty}\frac{1}{n}\log \vol(f^{n}(W^{i}_{2\rho}(p)))\geq h_{i}-\varepsilon-2\varepsilon\sum_{l=1}^{i}\dim E^{l},\,\,\,\,\forall\,k\in\mathbb{N},\,\forall\,p\in\Lambda_{k}^{\varepsilon}.$$ Hence we have $$\liminf_{n\to+\infty}\frac{1}{n}\log \vol(f^{n}((W^{i}_{2\rho}(p)))\geq h_{i}-\varepsilon-2\varepsilon\sum_{l=1}^{i}\dim E^{l},\,\,\,\,\forall\,p\in R_{\mu}.$$

By the arbitrariness of $\varepsilon$ and $\rho$, we get the conclusion.

\section{Proof of Theorem \ref{upper bound mixture C1+ setting}}
We now explain how to deduce our Theorem \ref{upper bound mixture C1+ setting} based on the arguments of Ledrappier and Young in  \cite{LeY85I}.

\begin{proof}
	Let $\xi^{u}$ be any increasing measurable partition subordinate to $W^{u}$. By Proposition \ref{well defined hi} (see Remark \ref{entropy for increasing partition}), $h_{u}(f,\mu)=H_{\mu}(\xi^{u}|f\xi^{u})=h_{\mu}(f,\xi^{u})$. Hence $h_{u}(f,\mu)\leq h(f,\mu)$. So it is sufficient to prove $h_{u}(f,\mu)\geq h(f,\mu)$. 
	
	In the following argument, some properties only hold for $\mu$-a.e. $x$. But without loss of generality, we assume these properties hold for any $x\in R_{\mu}$.

	For $\varepsilon>0$ and $x\in \Lambda_{k}^{\varepsilon}$, let $S^{cu}(x)$ be the set in Lemma \ref{hyperbolicity implies points always in chart are in unstable manifold}.
	By Lemma 2.4.2 in  \cite{LeY85I}, there is a measurable partition $\xi$ with $H_{\mu}(\xi)<+\infty$ such that $\xi^{+}(x)\subset S^{cu}(x),\,x\in R_{\mu}$ where $\xi^{+}=\vee_{n=0}^{+\infty}f^{n}\xi$. Since $H_{\mu}(\xi)<+\infty$, we can assume $h_{\mu}(f,\xi)\geq h(f,\mu)-\varepsilon$.

	By Lemma \ref{hyperbolicity implies points always in chart are in unstable manifold}, $\xi^{+}(x)\subset W^{u}(x),\,x\in R_{\mu}$. 
	
	We note the following facts:
	\begin{itemize}
		\item By Lemma 3.2.1 in  \cite{LeY85I}, we have $$h_{u}(f,\mu)=H_{\mu}(\xi^{u}|f\xi^{u})=h_{\mu}(f,\xi^{u}\vee \xi^{+}).$$
		\item Since $\xi^{+}$ is increasing, $\xi^{+}(x)\subset W^{u}(x)$ and $\diam(f^{-n}(\xi^{+}(x)))\to 
		0$ for $\mu$-a.e. $x$, Lemma~\ref{extension of the relations between subordinate partitions} yields $$h_{\mu}(f,\xi^{u}\vee \xi^{+})= h_{\mu}(f,\xi^{+}).$$
		\item Since $\xi^{+}$ is increasing, 
		$$h_{\mu}(f,\xi^{+})=H_{\mu}(\xi^{+}|f(\xi^{+}))=H_{\mu}(\xi\vee f(\xi^{+})|f(\xi^{+}))=H_{\mu}(\xi|f(\xi^{+}))=h_{\mu}(f,\xi).$$
	\end{itemize}
	
	Hence we have $$h_{u}(f,\mu)=h(f,\xi)\geq h(f,\mu)-\varepsilon.$$
	
	Since $\varepsilon$ is arbitrarily small, we get the conclusion.
	
\end{proof}

\begin{Acknowledgements}
	I would like to thank my advisors, J\'{e}r\^{o}me Buzzi and Dawei Yang, for their great patience, encouragement and math guidance. I also thank Prof. Sylvain Crovisier and Jinhua Zhang for helpful discussions. 
\end{Acknowledgements}

~\\
\begin{tabular}{l l l}

	\emph{\normalsize Yuntao ZANG}
	\medskip\\
	
	\small Laboratoire de Math\'ematiques d'Orsay
	\\
	\small CNRS - Universit\'e Paris-Sud
	\\
	\small Orsay 91405, France
	\\
		\small School of Mathematical Sciences
	\\
	\small Soochow University
	\\
	\small Suzhou, 215006, P.R. China
	\\
	
	\small \texttt{yuntao.zang@math.u-psud.fr }
	
\end{tabular}

\end{document}